\newtheoremstyle{rmk}
  {\topsep}
  {\topsep}
  {}
  {0pt}
  {\bfseries}
  {.}
  { }
  {\thmname{#1}\thmnumber{ #2}\thmnote{ (#3)}}
\newtheorem{theorem}{Theorem}[section]
\newtheorem{corollary}[theorem]{Corollary}
\newtheorem{lemma}[theorem]{Lemma}
\theoremstyle{rmk}
\newtheorem{definition}[theorem]{Definition}
\newtheorem{note}[theorem]{Remark}
\begin{document}

\title{The Supersingularity of Hurwitz Curves} 
\author{Erin Dawson, Henry Frauenhoff, Michael Lynch, Amethyst Price,\\ Seamus Somerstep, Eric Work \\
		Graduate Student Advisor: Dean Bisogno\\ Faculty Advisor: Rachel Pries}
\maketitle
\makebox[\linewidth]{\rule{12cm}{0.4pt}}
\begin{abstract}
We study when Hurwitz curves are supersingular. Specifically, we show that the curve $H_{n,\ell}: X^nY^\ell + Y^nZ^\ell + Z^nX^\ell = 0$, with $n$ and $\ell$ relatively prime, is supersingular over the finite field $\mathbb{F}_{p}$ if and only if there exists an integer $i$ such that $p^i \equiv -1 \bmod (n^2 - n\ell + \ell^2)$. If this holds, we prove that it is also true that the curve is maximal over $\mathbb{F}_{p^{2i}}$. Further, we provide a complete table of supersingular Hurwitz curves of genus less than 5 for characteristic less than 37.

\begin{center}
\textit{Keywords}: \\Hurwitz Curve, Hasse-Weil Bound, Maximal Curve,\\ Minimal Curve, Fermat Curve, Supersingular Curve\\
\textit{Subject Classification}: \\Primary: 11G20, 11M38, 14H37, 14H45, 11E81; \\Secondary: 11G10, 14H40, 14K15
\end{center}

\end{abstract}
\makebox[\linewidth]{\rule{12cm}{0.4pt}}

\section{Introduction}
In 1941, Deuring defined the basic theory of supersingular elliptic curves. Supersingular curves are useful in error-correcting codes called Goppa codes. They also have potential applications to quantum resistant cryptosystems. 

In this paper we determine a condition for supersingularity of Hurwitz curves $H_{n,\ell}$ when $n$ and $\ell$ are relatively prime. In particular we show that every supersingular Hurwitz curve $H_{n,\ell}$ is maximal over some finite field. We also provide a classification of supersingular Hurwitz curves with genus less than 5 over fields with characteristic less than 37 and find some restrictions on the genera of Hurwitz curves. 

\subsection{Acknowledgements}

We would like to thank Dr.~Rachel Pries for proposing this question and guiding us through our research process. We would also like to thank Dr.~{\"O}zlem Ejder for all of her help and the referee for helpful comments. Finally, we would like to thank the College of Natural Sciences, the CSU Department of Mathematics, and the National Science Foundation for the REU supplement to DMS-15-02227. Pries was partially supported by NSF grant DMS-15-02227. This project would not be possible without all of you.

\section{Background information}

We first define the Hurwitz curve and the Fermat curve. Next we define the zeta function of a curve. From the zeta function we compute the normalized Weil numbers which we use to study supersingularity. We must also state the Hasse-Weil bound in order to define maximality and minimality.

\subsection{The Hurwitz curve and the Fermat curve}\label{sec:curves}
Let $n$, $\ell$, and $d$ be positive integers.  Let $F$ be a field.

\begin{definition}[Hurwitz curve $H_{n,\ell}$]
The {\em Hurwitz curve} $H_{n,\ell}$ over $F$ is given by the projective equation
\begin{equation*}
H_{n,\ell}: X^nY^\ell+Y^nZ^\ell+Z^nX^\ell=0.
\end{equation*}
\end{definition}
Throughout this paper, set $m=n^2-n\ell+\ell^2$. The Hurwitz curve $H_{n,\ell}$ has the following genus
\begin{equation*}
g=\frac{m+2-3\gcd{(n,\ell)}}{2}
\end{equation*} 
and is smooth when the characteristic $p$ of $F$ is relatively prime to $m$.

\begin{definition}[Fermat curve $\mathcal{F}_{d}$]
The {\em Fermat curve} of degree $d$ over $F$ is given by the projective equation
\begin{equation*}
\mathcal{F}_{d}: U^{d}+V^{d}+W^{d}=0.
\end{equation*}
\end{definition}
The Fermat curve $\mathcal{F}_d$ has genus $\frac{(d-1)(d-2)}{2}$ and is smooth when the characteristic $p$ of $F$ does not divide $d$.
Note that the Hurwitz curve $H_{n,\ell}$ is covered by the Fermat curve of degree $m = n^2 - n\ell+\ell^2$; see Section \ref{sec:covers} for more details.

\subsection{Zeta Function}

Let $\mathbb{F}_q$ be a finite field of cardinality $q$ where $q$ is a power of a prime $p$.
For a curve $C$ defined over $\mathbb{F}_q$, denote the number of points on $C$ by $\#C(\mathbb{F}_q)$. For extensions of $\mathbb{F}_q$, define $N_s = \#C(\mathbb{F}_{q^s})$. 
\begin{definition}[Zeta function]
The {\em zeta function} of a curve $C/\mathbb{F}_q$ is the series
\begin{equation} \label{Eq:Zeta}
{Z}(C/\mathbb{F}_q,T)=\text{exp}\bigg(\sum_{s=1}^{\infty}\frac{{N}_{s}T^s}{s}\bigg).
\end{equation}
\end{definition}
Rationality of the zeta function for curves was proven by Weil \cite{MR0027151, MR0029393}. 
In particular, Weil showed that the zeta function can be written as
\begin{equation} \label{Eq:Zeta-Weil}
{Z}(C/\mathbb{F}_q,T) = \frac{	L(C/\mathbb{F}_q,T)}{(1-T)(1-qT)}.
\end{equation}
The $L$-polynomial, $L(C/\mathbb{F}_q,T) \in \mathbb{Z}[T]$, has degree $2g$ \cite[page 152]{IrelandRosen},
\begin{equation} \label{Eq:L-poly}
L(C/\mathbb{F}_q,T)=1+C_1T+...+C_{2g}T^{2g}.
\end{equation}
The $L$-polynomial of a curve $C$ over $\mathbb{F}_q$ with genus $g$ factors in $\mathbb{C}[T]$ as
\begin{equation*}
L(C/\mathbb{F}_q,T) = \displaystyle\prod_{i=1}^{2g}(1-\alpha_iT).
\end{equation*}
Furthermore, $|\alpha_i| = \sqrt{q}$ for each $1 \leq i \leq 2g$ \cite[page 155]{IrelandRosen}. The normalized Weil numbers (NWNs) are the normalized reciprocal roots of the $L$-polynomial.
\begin{definition}[Normalized Weil Numbers]
The {\em Weil numbers} of $C/\mathbb{F}_q$ are the reciprocal roots $\alpha_i$ of $L(C/\mathbb{F}_q,T)$ for $1 \leq i \leq 2g$. The {\em normalized Weil numbers} are the values $\alpha_i/\sqrt{q}$ for $1 \leq i \leq 2g$.
\end{definition}
\begin{note} \label{Rmk:NWN/Extentions}
If $\{\alpha_{1}, \ldots ,\alpha_{2g}\}$ are the normalized Weil numbers over $\mathbb{F}_q$, then $\{\alpha_{1}^i, \ldots ,\alpha_{2g}^i\}$ are the normalized Weil numbers over $\mathbb{F}_{q^i}$.
\end{note}

The coefficients of $L(C/\mathbb{F}_q,T)$ follow a pattern.
For $k \in {\mathbb N}$, we denote the set of partitions of $k$ by $\text{par}(k)$ 
and the length of a partition $\gamma$ by $\text{len}(\gamma)$. 

\begin{lemma} \label{Lem:ZetaCoeffs}
In Equation (\ref{Eq:L-poly}) for $0 \leq k \leq 2g$, the coefficient $C_k$ has the form 
\begin{align*}
C_k=\sum_{\gamma \in {\rm par}(k)} \frac{\displaystyle \prod_{j \in \gamma}\frac{N_j}{j}}{\rm len(\gamma)!}-\sum_{i=0}^{k-1}(C_i \sum_{\mu =0}^{k-i}q^\mu).
\end{align*}
\end{lemma}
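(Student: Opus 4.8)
The plan is to obtain the identity by comparing the two descriptions of the zeta function $Z(C/\mathbb{F}_q,T)$ in Equations~(\ref{Eq:Zeta}) and~(\ref{Eq:Zeta-Weil}), working throughout in the formal power series ring $\mathbb{Q}[[T]]$ so that no question of convergence arises. First I would record the elementary expansion
$$\frac{1}{(1-T)(1-qT)}=\sum_{j\ge 0}\Bigl(\sum_{\mu=0}^{j}q^{\mu}\Bigr)T^{j},$$
which follows by multiplying the geometric series $\sum_{a\ge 0}T^{a}$ and $\sum_{b\ge 0}q^{b}T^{b}$ (or by partial fractions). Multiplying this expansion by $L(C/\mathbb{F}_q,T)=\sum_{i=0}^{2g}C_iT^{i}$ as in Equations~(\ref{Eq:Zeta-Weil}) and~(\ref{Eq:L-poly}) and reading off the coefficient of $T^{k}$ (for $0\le k\le 2g$) gives
$$[T^{k}]\,Z(C/\mathbb{F}_q,T)=\sum_{i=0}^{k}C_i\Bigl(\sum_{\mu=0}^{k-i}q^{\mu}\Bigr)=C_k+\sum_{i=0}^{k-1}C_i\Bigl(\sum_{\mu=0}^{k-i}q^{\mu}\Bigr),$$
where the last equality isolates the $i=k$ term, for which the inner sum equals $1$.

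Next I would compute $[T^{k}]\,Z(C/\mathbb{F}_q,T)$ directly from Equation~(\ref{Eq:Zeta}). Writing $g(T)=\sum_{s\ge 1}\tfrac{N_s}{s}T^{s}$ and using $\exp(g)=\sum_{r\ge 0}g(T)^{r}/r!$, I would expand each power $g(T)^{r}=\sum_{s_1,\dots,s_r\ge 1}\tfrac{N_{s_1}}{s_1}\cdots\tfrac{N_{s_r}}{s_r}\,T^{s_1+\cdots+s_r}$ as a sum over ordered tuples of positive integers. An ordered $r$-tuple $(s_1,\dots,s_r)$ with $s_1+\cdots+s_r=k$ is precisely an element $\gamma\in\text{par}(k)$ with parts $s_1,\dots,s_r$ and $\text{len}(\gamma)=r$, so collecting the contributions to $T^{k}$ yields
$$[T^{k}]\,Z(C/\mathbb{F}_q,T)=\sum_{\gamma\in\text{par}(k)}\frac{1}{\text{len}(\gamma)!}\prod_{j\in\gamma}\frac{N_j}{j}.$$
Equating this with the formula from the previous paragraph and solving for $C_k$ gives exactly the claimed expression.

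The one genuinely delicate step is this last combinatorial identification: matching the coefficient of $T^{k}$ in $\exp\bigl(\sum_{s}\tfrac{N_s}{s}T^{s}\bigr)$ with the weighted sum $\sum_{\gamma\in\text{par}(k)}\tfrac{1}{\text{len}(\gamma)!}\prod_{j\in\gamma}\tfrac{N_j}{j}$, i.e., correctly pairing the factor $1/r!$ coming from the exponential series with the set of ordered length-$r$ tuples of positive integers summing to $k$. Everything else --- the geometric-series identity for $1/((1-T)(1-qT))$ and the one-line rearrangement of the coefficient comparison into a recursion for $C_k$ --- is routine bookkeeping. If one prefers to avoid writing out the exponential expansion, the same result follows by inducting on $k$ from the relation $(1-T)(1-qT)\,Z(C/\mathbb{F}_q,T)=L(C/\mathbb{F}_q,T)$ together with the classical recursion relating the power-sum data $N_s$ to the coefficients of $\exp(g)$; this is the same computation reorganized.
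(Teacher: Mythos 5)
Your proposal is correct and follows essentially the same route as the paper: expand $\exp\bigl(\sum_{s\ge 1}\tfrac{N_s}{s}T^s\bigr)$ to identify the coefficient of $T^k$ as the partition-indexed sum, expand $\tfrac{1}{(1-T)(1-qT)}$ as a product of geometric series against $L(C/\mathbb{F}_q,T)$, and equate coefficients of $T^k$ to solve for $C_k$. The only caveat --- one shared with the paper's own statement and proof --- is that the tuples $(s_1,\dots,s_r)$ arising from $g(T)^r$ are \emph{ordered}, so the sum over $\mathrm{par}(k)$ must be read as a sum over compositions (equivalently, each unordered partition weighted by its number of distinct rearrangements), as the displayed coefficient $\tfrac{N_1N_2}{2}$ of $T^3$ confirms; with that convention understood, your identification is exactly the paper's ``key step.''
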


\begin{proof}
Equation \eqref{Eq:Zeta} can be expanded using the Taylor series of the exponential function
\begin{align*}
Z(C/\mathbb{F}_q,T)=\displaystyle \sum_{i=0}^{\infty}\frac{(N_1T+\frac{N_2}{2}T^2+\ldots+\frac{N_{2g}}{2g}T^{2g})^i}{i!}.
\end{align*}
Collecting terms up through $T^3$ gives a pattern to follow: 
\begin{equation}\label{eq:zetaside1}
Z(C/\mathbb{F}_q,T)=1 + (N_1)T + \bigg(\frac{N_2}{2}+\frac{N_1^2}{2}\bigg)T^2 + \bigg(\frac{N_3}{3}+\frac{N_1N_2}{2}+\frac{N_1^3}{6}\bigg)T ^3 + \ldots.
\end{equation}

The key step is to recognize that the subscripts on the $N_j$ are the partitions of $k$. The coefficient on $T^k$ can be written as
\begin{align*}
\sum_{\gamma \in \text{par}(k)} \frac{\displaystyle \prod_{j \in \gamma}\frac{N_j}{j}}{\text{len}(\gamma)!}.
\end{align*}

Equation \eqref{Eq:Zeta-Weil} gives a simplified version of $Z(C/\mathbb{F}_q,T)$. Using the Taylor series for each of the denominator terms as well as equation \eqref{Eq:L-poly} yields the following expansion: 
\begin{equation}\label{eq:zetaside2}
Z(C/\mathbb{F}_q,T)=(1+C_1T+...+C_{2g}T^{2g})(1+T+T^2+\ldots)(1+qT+q^2T^2+\ldots).
\end{equation}
Expanding and collecting terms, the coefficients on $T^k$ are given by
\begin{equation*} 
\sum_{i=0}^{k-1}(C_i \sum_{j =0}^{k-i}q^j)+C_k.
\end{equation*}
Setting equation \eqref{eq:zetaside1} and equation \eqref{eq:zetaside2} equal and comparing coefficients gives a linear system allowing one to solve for $C_k$ in terms of the values of $N_s$.
\end{proof}

\subsection{The Newton Polygon and Supersingularity}
Fix a curve $C/\mathbb{F}_q$ with associated $L$-polynomial $L(C/\mathbb{F}_q,T)$. 

\begin{definition}[Supersingularity] \label{Rmk:SS-RootsOfUnity}
The curve $C$ is {\em supersingular} if all its normalized Weil numbers are roots of unity.
\end{definition}

Another way to check if $C$ is supersingular is with its Newton polygon.

\begin{definition}[Normalized Valuation on $\mathbb{F}_{p^r}$]\label{defn:val}
Let $n = p^lk$ be an integer with $p \nmid k$. We denote the normalized $\mathbb{F}_{p^r}$-valuation of $n$ by $\text{val}_{p^r}(n) = \frac{l}{r}$ and the prime-to-$p$ part of $n$ by $n_p = k$. If $n=0$, we say $\text{val}_{p^r}(0) = \infty$.
\end{definition}
\begin{definition}[Newton Polygon]
Fix a curve $C/\mathbb{F}_{p^r}$ with $L$-polynomial in the form of equation \eqref{Eq:L-poly}. The {\em Newton polygon} of $C/\mathbb{F}_{p^r}$ is the lower convex hull of the points $\{(i,\text{val}_{p^r}(C_i)) \mid 0 \leq i \leq 2g \}$.
\end{definition}

\begin{note}\label{rmk:newton}
Because $C_0 = 1$ for every curve $C/\mathbb{F}_{p^r}$, the Newton polygon will always have initial point $(0,0)$. Likewise the final coefficient of $L(C/\mathbb{F}_{p^r}, T)$ is always $C_{2g} = p^{rg}$. For this reason the Newton polygon always has terminal point $(2g,g)$.
\end{note}
From Remark \ref{rmk:newton}, we can see that the Newton polygon of a curve $C$ over $\mathbb{F}_{p^r}$ is always a union of line segments on or below the line $y = \frac{1}{2}x$ with increasing slopes.
\begin{note}
A curve $C/\mathbb{F}_{q}$ is supersingular if and only if its Newton polygon is a line segment with slope $\frac{1}{2}$.
\end{note}

\subsection{Minimality and Maximality}
As a consequence of the Weil conjectures, the number of points on a curve $C/\mathbb{F}_q$ is controlled by the Hasse-Weil bound:
\begin{equation*}
1+q-2g\sqrt{q} \leq \#C(\mathbb{F}_{q}) \leq 1+q+2g\sqrt{q}.
\end{equation*}
The Hasse-Weil bound for curves was proven by Weil \cite{MR0027151}.

\begin{definition}[Minimal]
A curve $C/\mathbb{F}_q$ is {\em minimal} if 
\begin{equation*}
\#C(\mathbb{F}_q) = 1+q-2g\sqrt{q}.
\end{equation*}
\end{definition}
\begin{definition}[Maximal]
A curve $C/\mathbb{F}_q$ is {\em maximal} if 
\begin{equation*}
\#C(\mathbb{F}_q) = 1+q+2g\sqrt{q}.
\end{equation*}
\end{definition}

\begin{note}[{\cite[page 22]{MR0027151}}, {\cite[page 69]{MR0029522}}]\label{rmk:nwnlemma}
The curve C is maximal over $\mathbb{F}_q$ (resp.\ minimal over $\mathbb{F}_q$) if and only if all its normalized Weil numbers are -1 (resp.\ 1) over $\mathbb{F}_q$.
\end{note}

In the following remark, we use the notation that $\zeta_k$ is the primitive $k^{\text{th}}$ root of unity $e^{\frac{2\pi i}{k}}$. Notice that there is a power $s$ such that $\zeta^s_k = -1$ if and only if $k$ is even.
\begin{lemma}
Let $C$ be a supersingular curve over $\mathbb{F}_q$. Suppose the normalized Weil numbers of $C/\mathbb{F}_q$ are of the form $\zeta^{t_1}_{k_1}, \ldots, \zeta^{t_{2g}}_{k_{2g}}$. Assume $\gcd(k_i,t_i) = 1$. The curve $C$ is maximal over $\mathbb{F}_{q^{r}}$ if and only if
\begin{itemize}
\item there exists $s \geq 1$ and $b_i$ odd, such that $k_i = 2^s(b_i)$
\item and $r$ is an odd multiple of {}$2^{s-1}{\rm lcm}(b_1, \ldots, b_n)$.
\end{itemize}
\end{lemma}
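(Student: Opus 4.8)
The plan is to translate maximality into a statement about roots of unity and then into a divisibility condition on $r$, using only elementary facts about orders of roots of unity.

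First I would reduce to a pointwise condition. By Remark \ref{rmk:nwnlemma}, $C$ is maximal over $\mathbb{F}_{q^r}$ if and only if all of its normalized Weil numbers over $\mathbb{F}_{q^r}$ equal $-1$; by Remark \ref{Rmk:NWN/Extentions}, those numbers are $(\zeta_{k_i}^{t_i})^r = \zeta_{k_i}^{t_i r}$ for $1 \le i \le 2g$. So the claim is equivalent to: $\zeta_{k_i}^{t_i r} = -1$ for every $i$. Now $\zeta_{k_i}^{t_i r}$ has multiplicative order $k_i/\gcd(k_i, t_i r)$, and $\gcd(k_i,t_i)=1$ gives $\gcd(k_i, t_i r) = \gcd(k_i, r)$; since $-1$ is the unique element of $\mathbb{C}^\times$ of order $2$, the condition $\zeta_{k_i}^{t_i r} = -1$ is equivalent to $\gcd(k_i,r) = k_i/2$. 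In particular this forces $k_i$ even, which recovers the observation preceding the lemma.

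Next I would analyze $\gcd(k_i, r) = k_i/2$ one prime at a time, the only interesting prime being $2$. Write $k_i = 2^{s_i} b_i$ with $b_i$ odd and (necessarily) $s_i \ge 1$, and write $r = 2^a c$ with $c$ odd. Then $\gcd(k_i, r) = 2^{\min(a,s_i)}\gcd(b_i,c)$, so $\gcd(k_i,r) = 2^{s_i-1}b_i$ holds if and only if $\min(a,s_i) = s_i - 1$ and $\gcd(b_i,c) = b_i$, i.e.\ $a = s_i - 1$ and $b_i \mid r$ (the latter since $b_i$ is odd).

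Finally I would impose this simultaneously for $i = 1,\dots,2g$. The equations $a = s_i - 1$ force all the $s_i$ to coincide; calling the common value $s \ge 1$ yields $k_i = 2^s b_i$, which is the first bullet. Given this, the leftover requirements are $a = s-1$ (that is, $2^{s-1}$ exactly divides $r$) together with $b_i \mid r$ for all $i$ (that is, $\operatorname{lcm}(b_1,\dots,b_{2g}) \mid r$); since $\operatorname{lcm}(b_1,\dots,b_{2g})$ is odd, these two conditions together say precisely that $r$ equals $2^{s-1}\operatorname{lcm}(b_1,\dots,b_{2g})$ times an odd number, which is the second bullet. Conversely, if no common $s$ works (in particular if some $k_i$ is odd), then no $r$ satisfies all the congruences, so $C$ is maximal over no extension, consistent with the stated equivalence. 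The only step requiring genuine care is the $2$-adic bookkeeping that pins down $a$ and forces the $s_i$ to be equal; the rest is routine.
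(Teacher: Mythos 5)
Your proposal is correct and follows essentially the same route as the paper: reduce maximality over $\mathbb{F}_{q^r}$ to $\zeta_{k_i}^{r t_i}=-1$ for all $i$ via the remark on normalized Weil numbers, then extract the $2$-adic conditions on the $k_i$ and $r$. The paper's proof merely asserts the resulting condition on $s_i$, $b_i$, and $r$; your gcd/order computation ($\zeta_{k_i}^{t_i r}=-1 \iff \gcd(k_i,r)=k_i/2$, then comparing $2$-parts and odd parts) supplies exactly the bookkeeping the paper leaves implicit, and it is accurate.
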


\begin{proof} 
Assume $C$ is maximal over $\mathbb{F}_{q^r}$. By Remark \ref{rmk:nwnlemma}, the curve $C$ is maximal over $\mathbb{F}_{q^r}$ if and only if $\zeta^{rt_i}_{k_i} = -1$ for all $i$. Consequently, $k_i$ is even for all $i$. Thus $k_i = 2^{s_i} b_i$ for some positive integer $s_i$ and odd integer $b_i$. The condition $\zeta^{rt_i}_{k_i} = -1$ for all $i$ implies that there exists an $s$ such that $s=s_i$ for all $i$ and $r$ is an odd multiple of $2^{s-1}\text{lcm}(b_1, \ldots, b_n)$.

For the converse, the conditions imply that the normalized Weil numbers of $C$ over $\mathbb{F}_{q^r}$ are all $-1$.
\end{proof}

\section{Curve maps and covers}
\subsection{Aoki's Curve}
Let $\alpha = (a,b,c) \in \mathbb{N}^3$ with $a+b+c = m$. Note that $S_3$, the symmetric group on three letters, acts on $\alpha$ by permuting the coordinates. For $\sigma \in S_3$ we denote the action by $\alpha^\sigma$. We say two triples $\alpha = (a_1,a_2,a_3)$ and $\beta = (b_1,b_2,b_3)$ are equivalent, denoted $\alpha \approx \beta$, if there exist elements $t \in (\mathbb{Z}/m)^*$ and $\sigma \in S_3$ such that 
\begin{align*}
(a_1,a_2,a_3) \equiv (tb_{\sigma(1)},tb_{\sigma(2)},tb_{\sigma(3)}) \bmod m.
\end{align*}

In \cite{Aoki1} and \cite{Aoki2}, Aoki studies curves of the form
\begin{equation*}
D_\alpha: v^{m}=(-1)^{c}u^{a}(1-u)^{b}.
\end{equation*}
He provides the following conditions for when $D_\alpha$ is supersingular.
\begin{theorem}[{\cite[Theorem 1.1]{Aoki2}}] \label{Thm:Aoki1.1} 
The curve $D_\alpha$ is supersingular over $\mathbb{F}_{p^{r}}$ if and only if at least one of the following conditions holds:
\begin{itemize}
\item $p^{i} \equiv -1 \bmod m$ for some $i$.
\item $\alpha =(a,b,c) \approx (1,-p^{i},p^{i}-1)$ for some integer $i$ such that $d = \gcd(p^{i}-1,m) > 1$ and $p^{j} \equiv -1 \bmod \frac{m}{d}$ for some integer $j$. 
\end{itemize}\end{theorem}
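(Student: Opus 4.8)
The plan is to compute the Weil numbers of $D_\alpha$ explicitly as Jacobi sums and then decide, $p$-adically, when they are roots of unity. The curve $D_\alpha : v^m = (-1)^c u^a(1-u)^b$ is a cyclic $\mu_m$-cover of $\mathbb{P}^1$ branched only over $\{0,1,\infty\}$ --- equivalently, a quotient of the Fermat curve $\mathcal{F}_m$ --- so after base change to $\mathbb{F}_{p^f}$ with $f=\mathrm{ord}_m(p)$, its point count has the shape $\#D_\alpha(\mathbb{F}_{p^f})=p^f+1+\sum_t \pm J(\chi^{ta},\chi^{tb})$, where $\chi$ is a character of $\mathbb{F}_{p^f}^\times$ of order $m$ and $t$ runs over a set of residues mod $m$ that is stable under $t\mapsto pt$ and determined up to equivalence by $\alpha$. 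Writing $J_t:=J(\chi^{ta},\chi^{tb})$, one has $J_t\overline{J_t}=p^f$, so $J_t$ is divisible only by primes above $p$, and the normalized Weil numbers of $D_\alpha/\mathbb{F}_{p^f}$ are the numbers $J_t/p^{f/2}$. By Definition~\ref{Rmk:SS-RootsOfUnity} (supersingularity) and Remark~\ref{Rmk:NWN/Extentions} --- which makes supersingularity independent of the chosen exponent $r$ --- the curve $D_\alpha$ is supersingular if and only if $J_t^2/p^f$ is a root of unity for every such $t$.

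Since $|J_t|=p^{f/2}$ in every archimedean embedding and $J_t$ is a unit away from $p$, Kronecker's theorem reduces this root-of-unity condition to the purely $p$-adic one that $\mathrm{val}_{\mathfrak p}(J_t)=\tfrac{f}{2}$ for every prime $\mathfrak p\mid p$ --- equivalently, that the Newton polygon of the block of $L(D_\alpha/\mathbb{F}_{p^f},T)$ carrying these roots is a single segment of slope $\tfrac12$, in the sense of Remark~\ref{rmk:newton}. Stickelberger's theorem, in the form of the Gross--Koblitz formula together with $J(\chi^{ta},\chi^{tb})=g(\chi^{ta})g(\chi^{tb})/g(\chi^{t(a+b)})$, computes $\mathrm{val}_{\mathfrak p}(J_t)$ as an explicit sum of base-$p$ digit sums of the residues $ta,tb,tc\bmod m$. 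Supersingularity thereby becomes the combinatorial statement that this digit-sum function is constant --- hence equal to its average value $\tfrac{f}{2}$ --- as $t$ ranges over the relevant residues.

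Two mechanisms can force that constancy. First, if $p^i\equiv-1\bmod m$ for some $i$, then $f$ is even with $p^{f/2}\equiv-1\bmod m$, so every Gauss sum $g(\chi^s)$ with $\mathrm{ord}(\chi^s)\mid m$ becomes \emph{pure} --- equal to $p^{f/2}$ times a root of unity; hence every $J_t$ is $p^{f/2}$ times a root of unity, and $\mathcal{F}_m$ together with all of its quotients $D_\alpha$ is supersingular. This accounts for the first bullet, which does not involve $\alpha$. Second, even when $-1\notin\langle p\rangle$ in $(\mathbb{Z}/m)^\times$, the block attached to a special $\alpha$ can still have slope-$\tfrac12$ Newton polygon: a Hasse--Davenport relation collapses the relevant Jacobi sums onto Gauss or Jacobi sums for characters of order $m/d$, where $d=\gcd(p^i-1,m)$, and rerunning the digit-sum computation at the smaller modulus leaves exactly the requirement $p^j\equiv-1\bmod m/d$. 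Determining which triples admit such a collapse --- expressed through the equivalence $\approx$ --- identifies them as precisely $\alpha\approx(1,-p^i,p^i-1)$ with $d>1$. Combining the two mechanisms, and checking that no other $\alpha$ evades the digit-sum analysis, yields the stated ``if and only if.''

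The crux --- and the step I expect to wrestle with longest --- is this combinatorial core: showing that, outside the family $\alpha\approx(1,-p^i,p^i-1)$, the quantity $s_p(ta)+s_p(tb)+s_p(tc)$ can be constant in $t\in(\mathbb{Z}/m)^\times$ only when $-1\in\langle p\rangle\bmod m$, and that on the exceptional family the sole extra possibility is $p^j\equiv-1\bmod m/d$. This is a Stickelberger-ideal calculation that needs a careful case analysis of when digit sums balance, together with bookkeeping of the $(\mathbb{Z}/m)^\times$- and $S_3$-orbits to confirm that the two bullets are exhaustive and do not overlap spuriously. The remaining ingredients --- purity of Gauss sums when $p^{f/2}\equiv-1$, the Hasse--Davenport relations, and Kronecker's theorem --- are classical and can be quoted directly.
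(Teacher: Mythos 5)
This statement is not proved in the paper at all: it is quoted verbatim from Aoki (\cite[Theorem 1.1]{Aoki2}) and used as a black box, so there is no in-paper argument to compare yours against. Your outline does match the strategy Aoki actually uses in the cited source --- realize $D_\alpha$ as a cyclic quotient of $\mathcal{F}_m$, identify the Weil numbers with Jacobi sums $J(\chi^{ta},\chi^{tb})$, reduce supersingularity via Kronecker's theorem to the condition that every $\mathfrak p$-adic valuation equals $f/2$, and convert that to a base-$p$ digit-sum identity through Stickelberger/Gross--Koblitz. All of that framing is correct and standard.

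The genuine gap is the one you flag yourself, and it is not a minor loose end: the entire content of the theorem is the combinatorial classification of which triples $\alpha$ make $s_p(ta)+s_p(tb)+s_p(tc)$ constant over the relevant $t\in(\mathbb{Z}/m)^\times$, and your proposal asserts rather than proves that the only mechanisms are $-1\in\langle p\rangle\bmod m$ and the exceptional family $\alpha\approx(1,-p^i,p^i-1)$ with the auxiliary condition modulo $m/d$. Everything you do establish (purity of Gauss sums when $p^{f/2}\equiv-1\bmod m$, the Hasse--Davenport collapse, the valuation-theoretic reformulation) is classical and only shows the ``if'' direction plus a reformulation of the ``only if''; the exhaustiveness argument --- that no other $\alpha$ yields balanced digit sums --- is the hard several-page case analysis in Aoki's paper and is absent here. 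As written, the proposal is a correct reduction of the theorem to its crux, not a proof of it. For the purposes of this paper, the honest move is the one the authors make: cite \cite{Aoki2} for the full statement, since reproducing the digit-sum classification would be a substantial standalone undertaking.
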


\subsection{Covers of $H_{n,\ell}$ by $\mathcal{F}_m$}\label{sec:covers}
In Section \ref{sec:curves}, we noted that the Hurwitz curve $H_{n,\ell}$ is covered by the Fermat curve $\mathcal{F}_m$ where $m = n^2-n\ell+\ell^2$. On an affine patch the Fermat and Hurwitz curves are given by the following equations
\begin{align*}
\mathcal{F}_m: u^m + v^m + 1 &= 0 \\
H_{n,l}: x^n y^\ell + y^n + x^\ell &= 0. \\
\end{align*}
Then the following covering map is provided by \cite[Lemma 4.1]{AKT}
\begin{align*}
\phi: \mathcal{F}_m &\to H_{n,\ell} \\
(u, v)&\mapsto (u^{n}v^{-l}, u^{l}v^{n-l}). 
\end{align*}

Furthermore, it is known that $\mathcal{F}_m$ is supersingular over $\mathbb{F}_p$ if and only if $p^i \equiv -1 \bmod m$ for some integer $i$ \cite[Prop.\ 3.10]{shiodakatsura}. See also \cite[Theorem 3.5]{Yui}. In \cite[Theorem 5]{Tafazolian} it is shown that $\mathcal{F}_m$ is maximal over $\mathbb{F}_{p^{2i}}$ if and only if $p^{i}\equiv -1 \bmod m$.
\begin{note} \label{serre}
If $X \to Y$ is a covering of curves defined over $\mathbb{F}_{p^r}$, then the normalized Weil numbers of $Y/\mathbb{F}_{p^r}$ are a subset of the normalized Weil numbers of $X/\mathbb{F}_{p^r}$, see \cite{serre}.
\end{note}
Thus when a covering curve is supersingular (or maximal or minimal) the curve it covers is as well. 

\subsection{A Birational Transformations}
In \cite{Carbonne}, Bennama and Carbonne show that $H_{n,\ell}$ is isomorphic to a curve with affine equation
\begin{equation} \label{Eq:Carbonne Hurwitz}
y'^m=x'^{\lambda}(x'-1)
\end{equation}
via the following variable change. Suppose $1 \leq \ell < n$ and $\gcd(n,\ell)=1$. Then there exist integers $\theta$ and $\delta$ such that $1 \leq \theta \leq \ell$, $1 \leq \delta \leq n-1$, and $n \theta -\delta \ell =1$. Let $\lambda = \delta n - \theta (n-\ell)$ and $m=n^2-n\ell+\ell^2$. The birational transformation is as follows
\begin{equation*}
\begin{gathered}
\begin{cases}
	x=(-x')^{-\delta}((-1)^{\lambda}y')^n\\
    y=(-x')^{-\theta}((-1)^{\lambda}y')^\ell
\end{cases}
\end{gathered}
\end{equation*}
\centerline{\text{and}}
\begin{equation*}
\begin{gathered}
\begin{cases}
	x'=-x^\ell y^{-n}\\
    y'=(-1)^{\lambda}x^{\theta}y^{-\delta}.
\end{cases}
\end{gathered}
\end{equation*}
Equation \eqref{Eq:Carbonne Hurwitz} is very similar to the equation for $D_\alpha$ that Aoki studies but there are small differences. The following argument shows that these can be reconciled. Consequently, this variable change can be used to apply Aoki's results to Hurwitz curves.

Notice that equation \eqref{Eq:Carbonne Hurwitz} is divisible by $(x'-1)$ while Aoki studies curves whose equation contains a $(1-x')$ factor. Aoki requires that $a+b+c=m$ so the exponent on the negative sign is important. Inspecting equation \eqref{Eq:Carbonne Hurwitz} we see that $m$ will always be odd since $(n,\ell)=1$. Consequently, this negative sign is not an issue. Since $m$ is always odd we can replace $v$ with $-v$. This choice allows us to pick $c = m-a-b$. Then $b = 1$ and $a = \lambda$. 

\section{Supersingular Hurwitz Curves}
We arrive at explicit conditions on supersingularity for $H_{n,\ell}$ when $n$ and $\ell$ are relatively prime. We use results from \cite{Carbonne} and \cite{Aoki1} to accomplish this. We will be using affine equations for the Hurwitz curve in this section. 
\begin{lemma} \label{Lem:SS iff 2 conditions}
If $n$ and $\ell$ are relatively prime then $x^{n}y^{\ell}+y^{n}+x^{\ell} = 0$ is supersingular over $\mathbb{F}_{p}$ if and only if at least one of the following conditions holds.
\begin{enumerate}
\item  There exists $i\in\mathbb{Z}_{>0}$ such that $p^{i} \equiv -1 \bmod m$.

(In this case the Fermat curve covering the Hurwitz curve is maximal over $\mathbb{F}_{p^{2i}}$.)

\item There exists $i\in\mathbb{Z}_{>0}$ with $d = (p^{i}-1,m) > 1$ such that 
\begin{align*}
(\delta(n-\ell)+\ell\theta-1,1,-(\delta(n-\ell)+\ell\theta)) \approx (1,-p^{i},p^{i}-1)
\end{align*} 
and $p^{j} \equiv -1 \bmod(\frac{m}{d})$ for some integer $j$.
\end{enumerate}
\end{lemma}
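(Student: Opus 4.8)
The plan is to transport Aoki's classification in Theorem~\ref{Thm:Aoki1.1} across the Bennama--Carbonne birational transformation recorded around equation~\eqref{Eq:Carbonne Hurwitz}. Two standard facts make this legitimate: birationally equivalent smooth projective curves over a field are isomorphic, hence share the same zeta function, normalized Weil numbers, and Newton polygon, so one is supersingular exactly when the other is; and since $p \nmid m$ the Hurwitz curve $H_{n,\ell}$ is smooth, so it coincides with its smooth projective model. Thus it suffices to analyze any curve birational to $x^n y^\ell + y^n + x^\ell = 0$.

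First I would invoke \cite{Carbonne}: with $1 \le \ell < n$ and $\gcd(n,\ell) = 1$, choosing $\theta, \delta$ with $1 \le \theta \le \ell$, $1 \le \delta \le n-1$, $n\theta - \delta\ell = 1$, and setting $\lambda = \delta n - \theta(n-\ell)$, the Hurwitz curve is birational to $C : y'^m = x'^{\lambda}(x'-1)$. Then I would reconcile $C$ with Aoki's family $D_\alpha : v^m = (-1)^c u^a (1-u)^b$ exactly as in the paragraph after equation~\eqref{Eq:Carbonne Hurwitz}: because $\gcd(n,\ell) = 1$ forces $m = n^2 - n\ell + \ell^2$ to be odd, the substitution $v = -y'$ absorbs the sign, so $C \cong D_\alpha$ with $a = \lambda$, $b = 1$, and $c = m - \lambda - 1$, chosen so that $a + b + c = m$. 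A short check using $1 \le \theta \le \ell$ and $1 \le \delta \le n-1$ confirms $0 \le \lambda \le m-1$, so that $\alpha = (\lambda, 1, m - \lambda - 1) \in \mathbb{N}^3$ as Aoki requires.

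Finally I would apply Theorem~\ref{Thm:Aoki1.1} to $D_\alpha$ over $\mathbb{F}_p$, i.e.\ with $r = 1$. Aoki's first alternative, $p^i \equiv -1 \bmod m$, is verbatim condition~(1); the parenthetical assertion that the covering Fermat curve $\mathcal{F}_m$ is then maximal over $\mathbb{F}_{p^{2i}}$ is the cited statement \cite[Theorem~5]{Tafazolian}. For the second alternative, eliminating $1 = n\theta - \delta\ell$ yields the identity $\lambda = \delta(n-\ell) + \ell\theta - 1$, so the triple $(\lambda, 1, m - \lambda - 1)$ equals $\bigl(\delta(n-\ell)+\ell\theta - 1,\, 1,\, -(\delta(n-\ell)+\ell\theta)\bigr) \bmod m$; since $\approx$ is precisely the equivalence introduced in Aoki's setup, his condition $\alpha \approx (1, -p^i, p^i-1)$ together with $d = \gcd(p^i-1, m) > 1$ and $p^j \equiv -1 \bmod \tfrac{m}{d}$ becomes condition~(2) word for word. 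I expect the only genuine work---beyond quoting Theorem~\ref{Thm:Aoki1.1}---to be the bookkeeping: checking that the birational map is an isomorphism of the smooth projective models over $\mathbb{F}_p$ so that no Weil numbers are gained or lost (cf.\ Remark~\ref{serre}), verifying the parity argument that $m$ is odd, confirming that the sign substitution lands in Aoki's normalized form with $a+b+c=m$, and checking the arithmetic identity relating $\lambda$ to $\delta(n-\ell)+\ell\theta$.
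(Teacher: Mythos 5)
Your proposal is correct and follows essentially the same route as the paper: apply the Bennama--Carbonne birational change of variables to put $H_{n,\ell}$ in the form $y'^m = x'^{\lambda}(x'-1)$, use the oddness of $m$ to absorb the sign into Aoki's normal form with $a=\lambda=\delta(n-\ell)+\ell\theta-1$, $b=1$, $c=m-(\delta(n-\ell)+\ell\theta)$, and then quote Theorem~\ref{Thm:Aoki1.1}. You simply spell out the bookkeeping (smooth models share Weil numbers, the identity $\delta n-\theta(n-\ell)=\delta(n-\ell)+\ell\theta-1$) that the paper leaves implicit.
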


\begin{proof}
We use the variable substitution from \cite{Carbonne} to apply Aoki's results to Hurwitz curves. We use the substitutions:
\begin{itemize}
\item $m=n^{2}-n\ell+\ell^{2}$,
\item $a=\lambda=\delta(n-\ell)+\ell\theta-1$,
\item $b=1$,
\item $c=m-(\delta(n-\ell)+\ell\theta)$.
\end{itemize}
Combining these with Aoki's results completes the proof.
\end{proof}
\begin{note} \label{Lem:(n,l,m)=1}
If $n$ and $\ell$ are relatively prime, then $n$ and $\ell$ are relatively prime to $n^{2}-n\ell+\ell^2$.
\end{note}
\begin{theorem} \label{Thm:ss/Fp}
Suppose $n$ and $\ell$ are relatively prime and $m = n^2 - n\ell + \ell^2$. Then $H_{n,\ell}$ is supersingular over $\mathbb{F}_p$ if and only if $p^{i} \equiv -1 \bmod m$ for some positive integer $i$.
\end{theorem}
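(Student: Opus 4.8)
The plan is to read everything off Lemma~\ref{Lem:SS iff 2 conditions}. The forward implication is immediate: if $p^{i}\equiv -1\bmod m$ for some positive integer $i$, that is precisely Condition~(1) of Lemma~\ref{Lem:SS iff 2 conditions}, so $H_{n,\ell}$ is supersingular over $\mathbb{F}_{p}$. For the converse, assume $H_{n,\ell}$ is supersingular over $\mathbb{F}_{p}$ (so in particular $p\nmid m$). By Lemma~\ref{Lem:SS iff 2 conditions} either Condition~(1) holds, in which case we are done, or Condition~(2) holds; the whole point is to show that Condition~(2) by itself already forces a relation of the shape $p^{3i}\equiv -1\bmod m$, so the second case collapses into the first and the two disjuncts of the lemma become the single condition in the theorem.

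To exploit Condition~(2) I first want to pin down the triple $\beta=(\lambda,1,-\lambda-1)$, where $\lambda=\delta(n-\ell)+\ell\theta-1$, modulo $m$. Since $\gcd(\ell,m)=1$ by Remark~\ref{Lem:(n,l,m)=1}, set $s=n\ell^{-1}\bmod m$; then $s^{2}-s+1\equiv 0\pmod m$, because $\ell^{2}(s^{2}-s+1)=n^{2}-n\ell+\ell^{2}=m$. Using the Bennama--Carbonne relation $n\theta-\delta\ell=1$ to write $\delta\equiv s\theta-\ell^{-1}\pmod m$ and substituting into $\lambda\ell^{-1}\equiv s(\delta-\theta)+\theta$, the identity $s^{2}-s+1\equiv 0$ collapses the expression to $\lambda\equiv -s\pmod m$. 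Hence $\lambda$ is a unit with $\lambda^{3}\equiv -s^{3}\equiv 1$ (using $s^{3}\equiv -1$) and $1+\lambda+\lambda^{2}\equiv 0\pmod m$. In particular $-\lambda-1\equiv\lambda^{2}\pmod m$, so the multiset of coordinates of $\beta$ is $\{1,\lambda,\lambda^{2}\}$ modulo $m$.

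Now I unwind the equivalence in Condition~(2): $\beta\approx(1,-p^{i},p^{i}-1)$ means, by definition of $\approx$, that there are a unit $t\in(\mathbb{Z}/m)^{*}$ and $\sigma\in S_{3}$ with $\{1,\lambda,\lambda^{2}\}\equiv\{\,t,\,-tp^{i},\,t(p^{i}-1)\,\}$ as multisets modulo $m$. The scalar $t$ and the permutation $\sigma$ are the obstruction to extracting arithmetic information, but they are eliminated by passing to the product of the three coordinates, a quantity insensitive to the ordering: $\lambda^{3}\equiv -t^{3}p^{i}(p^{i}-1)\pmod m$. Because $t$ is itself one of $1,\lambda,\lambda^{2}$, we have $t^{3}\equiv 1$; and $\lambda^{3}\equiv 1$; hence $-p^{i}(p^{i}-1)\equiv 1$, i.e.\ $p^{2i}-p^{i}+1\equiv 0\pmod m$. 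Multiplying by $p^{i}+1$ gives $p^{3i}+1\equiv 0\pmod m$, that is $p^{3i}\equiv -1\bmod m$, which is Condition~(1) with exponent $3i$. This closes the converse.

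I expect the main obstacle to be the first computational step, namely proving $\lambda\equiv -n\ell^{-1}\pmod m$ (equivalently $\lambda^{3}\equiv 1$): this is exactly the rigidity that pins the triple $\beta$ down to $\{1,\lambda,\lambda^{2}\}$ and makes the product trick possible. Once that is in hand, passing to the multiplicative invariant to kill $t$ and $\sigma$ is the natural move and requires only the small observation that $t\in\{1,\lambda,\lambda^{2}\}$, so that $t^{3}\equiv 1$; the remainder is a one-line manipulation of $p^{2i}-p^{i}+1$. It is worth noting that this argument uses only the $\approx$-clause of Condition~(2) and never its divisibility hypotheses $d>1$ or $p^{j}\equiv -1\bmod(m/d)$.
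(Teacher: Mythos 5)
Your proof is correct, but it handles the crucial converse differently from the paper, so a comparison is worth recording. Both arguments run through Lemma~\ref{Lem:SS iff 2 conditions}, and both begin the converse with the same reduction of the triple: your $\lambda\equiv -n\ell^{-1}\pmod m$ is the same residue as the paper's $\tfrac{\ell}{n}-1$ (since $s=n\ell^{-1}$ satisfies $s^{2}-s+1\equiv 0\bmod m$, one has $\ell n^{-1}-1\equiv -s$), and your computation of it from $n\theta-\delta\ell=1$ checks out. After that the routes diverge. The paper keeps the triple as $(\tfrac{\ell}{n}-1,1,-\tfrac{\ell}{n})$ and runs a six-case analysis over the $S_3$-orbit of $(1,-p^{i},p^{i}-1)$, showing in every case that $\gcd(p^{i}-1,m)=1$, so the hypothesis $d>1$ of Condition~(2) fails and Condition~(2) simply cannot occur. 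You instead exploit the extra rigidity $\lambda^{3}\equiv 1$, $1+\lambda+\lambda^{2}\equiv 0$, so the triple becomes the multiset $\{1,\lambda,\lambda^{2}\}$, and then take the product of the coordinates, which is invariant under $\sigma\in S_3$ and under the scalar $t$ because $t=t\cdot 1$ must itself lie in $\{1,\lambda,\lambda^{2}\}$, whence $t^{3}\equiv 1$; this yields $p^{2i}-p^{i}+1\equiv 0\bmod m$ and hence $p^{3i}\equiv -1\bmod m$, so Condition~(2) collapses into Condition~(1) with exponent $3i$, with no case split and, as you note, no use of the clauses $d>1$ or $p^{j}\equiv -1\bmod (m/d)$. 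What each approach buys: yours is shorter and exposes the structural reason (the triple is the cyclic group $\{1,\lambda,\lambda^{2}\}$ of cube roots of unity mod $m$, so scaling and permuting cannot hide the congruence $p^{2i}-p^{i}+1\equiv 0$); the paper's case-by-case argument instead establishes directly that Condition~(2) is vacuous. In fact your congruence recovers that too, since any common prime divisor of $p^{i}-1$ and $m$ would divide $p^{2i}-p^{i}+1\equiv 1$, forcing $d=1$ --- so the two arguments are consistent, and yours could even be stated as a one-line strengthening of the paper's conclusion.
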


\begin{proof}
If $p^i \equiv -1 \bmod m$ for some positive integer $i$, then $\mathcal{F}_m$ is supersingular over $\mathbb{F}_p$ by \cite[Prop.\ 3.10]{shiodakatsura}. Recall from section \ref{sec:covers} that $\mathcal{F}_m$ covers $H_{n,\ell}$. 
Thus $H_{n,\ell}$ is supersingular over $\mathbb{F}_p$ by Remark \ref{serre}.

Suppose $H_{n,\ell}$ is supersingular over $\mathbb{F}_p$. By Lemma \ref{Lem:SS iff 2 conditions} it is enough to show condition 2 in Lemma \ref{Lem:SS iff 2 conditions} can not happen. We begin by simplifying it using the substitution $\theta=\frac{1+\ell\delta}{n}$ and reducing modulo $m$ to show that condition 2 is equivalent to $(\frac{\ell}{n}-1,1,-\frac{\ell}{n})\approx(1,-p^{i},p^{i}-1)$ for some $i$ such that $d = (p^{i}-1,m) > 1$ and $p^{j} \equiv -1 \mod (\frac{m}{d})$ for some integer $j$. Recall that $\alpha \approx \alpha' \text{ if } \alpha=t\alpha'^\sigma$ for some $t \in (\mathbb{Z}/m)^{*}$ and $\sigma\in S_3$. We will show that $p^i-1$ and $m$ are relatively prime. We label the three coordinates of $(\frac{\ell}{n}-1,1,-\frac{\ell}{n})$ as $(a,b,c)$ and the three coordinates of $(1,-p^{i},p^{i}-1)$ as $(A,B,C)$. 

The proof will address six cases accounting for the orbit of $(A,B,C)$ under the action of $S_3$. In each case we will show that $\gcd(p^{i}-1,m)=1$. Specifically, we show $d=1$ by taking these congruences modulo $d$. By Remark \ref{Lem:(n,l,m)=1} we know that $n^{-1}$ exists modulo $m$ and modulo $d$. Finally, note that $\frac{\ell}{n}$ is relatively prime to $d$.

\begin{itemize}
\item $(a,b,c)\equiv t(A,B,C) \bmod m$: Comparing $c$ and $tC$ yields 
\begin{align*}
-\frac{\ell}{n}\equiv t(p^{i}-1) \bmod m.
\end{align*}
Consequently, $\frac{\ell}{n}\equiv 0 \bmod d$. Therefore, $d = 1$. 

\item {$(a,b,c)\equiv t(B,A,C) \bmod m$}: Comparing $a$ with $tB$ and $b$ with $tA$ yields 
\begin{align*}
\frac{\ell}{n}-1	&\equiv -tp^i \bmod m \\
1					&\equiv t \bmod m.
\end{align*}
Substituting we have $\frac{\ell}{n}\equiv p^{i}-1 \bmod m$.  Reducing modulo $d$ produces $\frac{\ell}{n}\equiv 0 \bmod d$, thus $d = 1$.
\item {$(a,b,c)\equiv t(A,C,B) \bmod m$}: Comparing $b$ and $tC$ yields 
\begin{align*}
-\frac{\ell}{n}\equiv t(p^{i}-1) \bmod m.
\end{align*}
This is identical to the first case. 
\item {$(a,b,c)\equiv t(C,B,A) \bmod m$}: Comparing $a$ and $tC$ yields 
\begin{align*}
\frac{\ell}{n} - 1 \equiv t(p^i -1) \bmod m.
\end{align*}
Thus $\frac{\ell}{n} -1 \equiv 0 \bmod d$. Recall by the definition of $m$ and selection of $d$, we have $d \mid n^2 - n\ell + \ell^2$. Hence, $d$ divides $1 - \frac{\ell}{n} + (\frac{\ell}{n})^2$. We conclude $d|(\frac{\ell}{n})$, thus $d = 1$.
\item {$(a,b,c)\equiv t(C,A,B) \bmod m$}: Comparing $b$ with $tA$ and $c$ with $tB$ yields 
\begin{align*}
1 				&\equiv t \bmod m \\
\frac{\ell}{n}	&\equiv tp^i \bmod m.
\end{align*}
This case is completed as in the previous case. 
\item {$(a,b,c)\equiv t(B,C,A) \bmod m$}: Comparing $b$ with $tC$ yields 
\begin{align*}
1 \equiv t(p^{i}-1) \bmod m.
\end{align*}
Modulo $d$ this reduces to $1\equiv 0 \bmod d$. Therefore, $d = 1$.
\end{itemize}
\end{proof}

\begin{note}\label{rmk:TT}
There is a family of Hurwitz type curves with affine equations $\mathcal{C}_{a_1,a_2,n_1,n_2}: x^{n_1} y^{a_1} + y^{n_2} + x^{a_2} = 0$. Set $\delta = a_1 a_2 - a_2 n_2 + n_1 n_2$. When $q = p^r$ is coprime to $\delta$ then the curve $C_{a_1,a_2,n_1,n_2}$ is $\mathbb{F}_q$-covered by the Fermat curve $\mathcal{F}_\delta$ of degree $\delta$. In \cite[Theorem 2.9]{MR3562537} Tafazolian and Torres show that under certain numerical conditions the statements
\begin{itemize}
	\item the Fermat curve $\mathcal{F}_\delta$ is maximal over $\mathbb{F}_{q^2}$;
	\item the Hurwitz type curve $\mathcal{C}_{1,a_2,n_1,n_2}$ is maximal over $\mathbb{F}_{q^2}$;
	\item and $q+1 \equiv 0 \mod \delta$
\end{itemize}
are all equivalent.

The Hurwitz type curve $\mathcal{C}_{\ell,\ell,n,n}$ is the Hurwitz curve $H_{n,\ell}$. Thus in the case that $\ell = a_1=a_2$ and $n = n_1 = n_2$, Theorem \ref{Thm:ss/Fp} generalizes \cite[Theorem~2.9]{MR3562537}.
\end{note}

\begin{note}
Consider the family of curves with affine equations
\begin{align*}
N_{a_1,a_2,n_1,n_2} : x^{n_1} y^{a_1} + k_1 y^{n_2} + k_2 x^{a_2} = 0
\end{align*}
over $\mathbb{F}_{p^r}$ with $k_1,k_2 \in (\mathbb{F}_q)^{*}$, $n_1 \geq a_1$, $n_1 + a_1 > a_2$, $n_1 + a_1 > n_2$, if $n_1 = a_1$ then $n_2 \geq a_2$, and $p \nmid \gcd(a_1, a_2, n_1, n_2)$. Set $d = \gcd(a_1,a_2,n_1,n_2)$ and $\delta$ as in Remark \ref{rmk:TT}. Recall the definition of $n_p$ in Definition \ref{defn:val}. With these assumptions \cite[Theorem 4.12]{MR3475542} shows that if $(\delta/d)_p$ divides $q + 1$ then $N_{a_1,a_2,n_1,n_2}$ is maximal over $\mathbb{F}_q$ and if $N_{a_1,a_2,n_1,n_2}$ is maximal over $\mathbb{F}_{q^2}$ then $(\delta/d)_p$ divides $q^2 + 1$.

Note $N_{\ell,\ell,n,n} = H_{n,\ell}$. Thus Theorem \ref{Thm:ss/Fp} generalizes \cite[Theorem 4.12]{MR3475542} when $a_1 = a_2 = \ell$ and $n_1 = n_2 = n$.
\end{note}

\begin{corollary} \label{Cor:ss/Fp-max}
If $n$ and $\ell$ are relatively prime and $H_{n,\ell}$ is supersingular over $\mathbb{F}_p$, then it will be maximal over $\mathbb{F}_{p^{2i}}$ where $i$ is the same as in Theorem \ref{Thm:ss/Fp}.
\end{corollary}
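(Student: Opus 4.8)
The plan is to deduce the corollary directly from Theorem \ref{Thm:ss/Fp}, the Fermat cover $\phi\colon\mathcal{F}_m\to H_{n,\ell}$ of Section \ref{sec:covers}, and Tafazolian's maximality criterion for Fermat curves. First I would invoke Theorem \ref{Thm:ss/Fp}: since $H_{n,\ell}$ is supersingular over $\mathbb{F}_p$, there is a positive integer $i$ with $p^i\equiv -1\bmod m$. In particular $\gcd(p,m)=1$, so both $H_{n,\ell}$ and $\mathcal{F}_m$ are smooth over $\mathbb{F}_p$, and the covering map $\phi$, being given by monomials with coefficients in the prime field, is defined over $\mathbb{F}_p$. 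Note that supersingularity alone only tells us the normalized Weil numbers are roots of unity; it is the cover that will pin down the specific field $\mathbb{F}_{p^{2i}}$.

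Next I would apply \cite[Theorem 5]{Tafazolian}: the hypothesis $p^i\equiv -1\bmod m$ forces $\mathcal{F}_m$ to be maximal over $\mathbb{F}_{p^{2i}}$. By Remark \ref{rmk:nwnlemma} this says every normalized Weil number of $\mathcal{F}_m/\mathbb{F}_{p^{2i}}$ equals $-1$, and by Remark \ref{Rmk:NWN/Extentions} these are exactly the $2i$-th powers of the normalized Weil numbers of $\mathcal{F}_m/\mathbb{F}_p$. Hence $\alpha^{2i}=-1$ for every normalized Weil number $\alpha$ of $\mathcal{F}_m/\mathbb{F}_p$.

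Then I would use Remark \ref{serre}: because $\phi$ is a covering of curves defined over $\mathbb{F}_p$, the normalized Weil numbers of $H_{n,\ell}/\mathbb{F}_p$ form a subset of those of $\mathcal{F}_m/\mathbb{F}_p$. Consequently $\beta^{2i}=-1$ for every normalized Weil number $\beta$ of $H_{n,\ell}/\mathbb{F}_p$; equivalently, by Remark \ref{Rmk:NWN/Extentions} again, every normalized Weil number of $H_{n,\ell}/\mathbb{F}_{p^{2i}}$ equals $-1$. A final application of Remark \ref{rmk:nwnlemma} yields that $H_{n,\ell}$ is maximal over $\mathbb{F}_{p^{2i}}$, with the same $i$ as in Theorem \ref{Thm:ss/Fp}.

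I do not expect any serious obstacle here; the argument is a chaining of results already in hand. The only points that need a little care are verifying that the Fermat cover is defined over the prime field so that Remark \ref{serre} applies over $\mathbb{F}_p$, and observing that passing to a subset of the normalized Weil numbers commutes with raising to the $2i$-th power, which is immediate from Remark \ref{Rmk:NWN/Extentions}.
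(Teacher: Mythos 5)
Your proposal is correct and follows essentially the same route as the paper: Theorem \ref{Thm:ss/Fp} gives $p^i \equiv -1 \bmod m$, Tafazolian's result gives maximality of $\mathcal{F}_m$ over $\mathbb{F}_{p^{2i}}$, and the Fermat cover transfers maximality to $H_{n,\ell}$. The only difference is that you spell out the transfer step via Remarks \ref{serre}, \ref{Rmk:NWN/Extentions}, and \ref{rmk:nwnlemma}, which the paper leaves implicit in its earlier observation that a covered curve inherits maximality from its covering curve.
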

\begin{proof}
By Theorem \ref{Thm:ss/Fp}, if $H_{n,\ell}$ is supersingular over $\mathbb{F}_p$, then $p^{i} \equiv -1 \bmod m$ for some $i$. By the results of \cite{Tafazolian}, this implies $\mathcal{F}_m$ will be maximal over $\mathbb{F}_{p^{2i}}$. Since  $\mathcal{F}_m$ covers $H_{n,\ell}$, this implies $H_{n,\ell}$ will also be maximal over $\mathbb{F}_{p^{2i}}$.  
\end{proof}

A priori, if $H_{n,\ell}$ is supersingular (or maximal or minimal) over $\mathbb{F}_p$ then $\mathcal{F}_m$ may not be because it has more normalized Weil numbers.
\begin{corollary} \label{cor:SSHurwitz Covered by SSFermat}
If $n$ and $\ell$ are relatively prime and $H_{n,\ell}$ is supersingular over $\mathbb{F}_p$, then $\mathcal{F}_m$ is supersingular over $\mathbb{F}_p$.
\end{corollary}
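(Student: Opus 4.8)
The plan is to chain together Theorem \ref{Thm:ss/Fp} with the classical characterization of supersingularity for Fermat curves, observing that both are governed by the \emph{same} congruence condition on $p$ modulo $m$. Concretely, I would first invoke Theorem \ref{Thm:ss/Fp}: since $n$ and $\ell$ are relatively prime and $H_{n,\ell}$ is supersingular over $\mathbb{F}_p$, there exists a positive integer $i$ with $p^i \equiv -1 \bmod m$.

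Next I would feed this congruence into the result of Shioda--Katsura \cite[Prop.\ 3.10]{shiodakatsura} (see also \cite[Theorem 3.5]{Yui}), which was already recalled in Section \ref{sec:covers}: the Fermat curve $\mathcal{F}_m$ is supersingular over $\mathbb{F}_p$ if and only if $p^i \equiv -1 \bmod m$ for some integer $i$. The condition produced in the previous step is exactly the hypothesis of that equivalence, so $\mathcal{F}_m$ is supersingular over $\mathbb{F}_p$, which is the claim.

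The content here is that Theorem \ref{Thm:ss/Fp} has already done the real work, namely ruling out the second ``exceptional'' branch of Aoki's criterion (condition 2 of Lemma \ref{Lem:SS iff 2 conditions}) for Hurwitz curves; once that is known, supersingularity of $H_{n,\ell}$ over $\mathbb{F}_p$ and of $\mathcal{F}_m$ over $\mathbb{F}_p$ are literally controlled by the identical statement ``$p^i \equiv -1 \bmod m$ for some $i$.'' So I expect no genuine obstacle: the only thing to be careful about is that the integer $i$ appearing in Theorem \ref{Thm:ss/Fp} and the integer $i$ appearing in \cite[Prop.\ 3.10]{shiodakatsura} range over the same set (positive integers suffice, since $p^{2i}\equiv 1$ forces periodicity), so the two conditions are genuinely equivalent and not merely one-directional. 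This also explains the remark preceding the corollary: a priori the covering curve $\mathcal{F}_m$ carries strictly more normalized Weil numbers than $H_{n,\ell}$, so supersingularity need not pass upward along $\phi\colon \mathcal{F}_m \to H_{n,\ell}$ for formal reasons; it is precisely the arithmetic equivalence of the two congruence conditions that makes the implication go through.
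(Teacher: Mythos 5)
Your proposal is correct and follows the paper's own proof exactly: apply Theorem \ref{Thm:ss/Fp} to extract a positive integer $i$ with $p^i \equiv -1 \bmod m$, then invoke \cite[Prop.\ 3.10]{shiodakatsura} to conclude that $\mathcal{F}_m$ is supersingular over $\mathbb{F}_p$. The extra commentary about matching the ranges of $i$ and about why the implication cannot follow formally from the covering is accurate but not needed beyond the two-step chain.
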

\begin{proof}
If $H_{n,\ell}$ supersingular over $\mathbb{F}_p$ and $\gcd(n,\ell) = 1$, Theorem \ref{Thm:ss/Fp} shows the existence of positive integer $i$ such that $p^i \equiv -1 \bmod m$. Then by \cite[Proposition~3.10]{shiodakatsura}, $\mathcal{F}_m$ is supersingular over $\mathbb{F}_p$.
\end{proof}

Partial results are known for when a Hurwitz curve is maximal. 
\begin{theorem}[{\cite[Theorem 3.1]{AKT}}]
Let $\ell = 1$. The curve $H_{n,1}$ is maximal over $\mathbb{F}_{q^{2j}}$ if and only if $p^j \equiv -1 \bmod m$ for some positive integer $j$.
\end{theorem}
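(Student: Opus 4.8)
The plan is to prove both directions by pinning down one normalized Weil number of $H_{n,1}/\mathbb{F}_p$ of exactly the right order. Write $m=n^2-n+1$, let $f$ be the multiplicative order of $p$ modulo $m$, and (as in Theorem~\ref{Thm:ss/Fp}) take the base field to be $\mathbb{F}_p$. The implication ``$p^{j}\equiv -1\bmod m\ \Rightarrow\ H_{n,1}$ maximal over $\mathbb{F}_{p^{2j}}$'' is the easy one: Tafazolian's criterion \cite{Tafazolian} gives that $\mathcal{F}_m$ is maximal over $\mathbb{F}_{p^{2j}}$, and since the cover $\phi\colon\mathcal{F}_m\to H_{n,1}$ is defined over the prime field, Remark~\ref{serre} and Remark~\ref{rmk:nwnlemma} give that $H_{n,1}$ is maximal over $\mathbb{F}_{p^{2j}}$. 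For the converse, suppose $H_{n,1}$ is maximal over $\mathbb{F}_{p^{2j}}$. Then $\omega^{2j}=-1$ for every normalized Weil number $\omega$ of $H_{n,1}/\mathbb{F}_p$ by Remarks~\ref{Rmk:NWN/Extentions} and~\ref{rmk:nwnlemma}; in particular every $\omega$ is a root of unity, so $H_{n,1}$ is supersingular over $\mathbb{F}_p$, and Theorem~\ref{Thm:ss/Fp} yields a least positive integer $i_0$ with $p^{i_0}\equiv -1\bmod m$. Since $-1$ lies in the cyclic group $\langle p\rangle\le(\mathbb{Z}/m)^{\times}$ and has order two, $f$ is even, $p^{f/2}\equiv -1\bmod m$, and minimality forces $i_0=f/2$; hence for any positive integer $j$ the congruence $p^{j}\equiv -1\bmod m$ is equivalent to $j\equiv i_0\pmod{2i_0}$, that is, to $\zeta_{2f}^{2j}=-1$. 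Moreover the easy implication, applied with $j=i_0$, shows $H_{n,1}$ is maximal over $\mathbb{F}_{p^{2i_0}}=\mathbb{F}_{p^{f}}$.

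The heart of the argument is to exhibit a normalized Weil number of $H_{n,1}/\mathbb{F}_p$ of order exactly $2f$. For this I would invoke the Bennama--Carbonne transformation \cite{Carbonne}, which (since $\ell=1$, so $\theta=1$ and $\delta=n-1$) identifies $H_{n,1}$ with the superelliptic curve $C\colon y^m=x^{(n-1)^2}(x-1)$, a cyclic $\mu_m$-cover of $\mathbb{P}^1$ branched over $0,1,\infty$ with $\mu_m$ acting by $y\mapsto\zeta y$. In the resulting eigenspace decomposition of $\ell$-adic cohomology, $H^1_{\mathrm{et}}(H_{n,1},\mathbb{Q}_\ell)=\bigoplus_{t=1}^{m-1}V_t$ with each $V_t$ one-dimensional — every $t\ne 0$ contributes, since $\gcd(n-1,m)=1$ forces the three local exponents $t(n-1)^2$, $t$, $t(n-1)$ to be nonzero modulo $m$ — and geometric Frobenius permutes the $V_t$ along the orbits of multiplication by $p$ on $\{1,\dots,m-1\}$. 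The orbit of $t=1$ has size $f$, so Frobenius cyclically permutes the corresponding $f$ eigenlines; hence, on their span $W$, one has $\mathrm{Frob}_{\mathbb{F}_p}^{f}=\mathrm{Frob}_{\mathbb{F}_{p^{f}}}=c\cdot\mathrm{id}_W$ for a scalar $c$, and the characteristic polynomial of $\mathrm{Frob}_{\mathbb{F}_p}$ on $W$ is $T^{f}-c$, with $f$ distinct roots. Because $H_{n,1}$ is maximal over $\mathbb{F}_{p^{f}}$, every eigenvalue of $\mathrm{Frob}_{\mathbb{F}_{p^{f}}}$ on $H^1$ equals $-p^{f/2}$ by Remark~\ref{rmk:nwnlemma}, so $c=-p^{f/2}$ and the roots of $T^{f}+p^{f/2}$ are $\sqrt{p}\,\zeta$ with $\zeta^{f}=-1$. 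Thus the normalized Weil numbers of $H_{n,1}/\mathbb{F}_p$ include every $f$-th root of $-1$, and among them $\zeta_{2f}$ has order exactly $2f$.

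To finish, apply $\omega^{2j}=-1$ with $\omega=\zeta_{2f}$: this gives $\zeta_{2f}^{2j}=-1$, equivalently (by the first paragraph) $p^{j}\equiv -1\bmod m$, which completes the converse. I expect the genuine work to be entirely in the middle step: making precise the $\mu_m$-eigenspace decomposition of étale $H^1$ for the cyclic cover $C\to\mathbb{P}^1$, checking that Frobenius permutes the pieces along $\langle p\rangle$-orbits, and verifying that a single orbit of size $f$ produces all $f$ of the $f$-th roots of its Frobenius determinant; everything else reduces to Theorem~\ref{Thm:ss/Fp}, Tafazolian's criterion for $\mathcal{F}_m$ \cite{Tafazolian}, and Remarks~\ref{serre}, \ref{Rmk:NWN/Extentions}, and~\ref{rmk:nwnlemma}. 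A cohomology-free alternative is to evaluate the Jacobi sum attached to $V_1$ via Stickelberger's theorem — essentially Aoki's analysis \cite{Aoki1} of $D_\alpha$ for $\alpha=\big((n-1)^2,1,n-1\big)$ — and read off the same root of unity.
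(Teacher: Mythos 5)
The paper does not prove this statement at all: it is imported verbatim as \cite[Theorem 3.1]{AKT}, so there is no internal proof to compare against. Judged on its own, your argument is essentially correct and gives a genuine derivation. The easy direction (Tafazolian's criterion for $\mathcal{F}_m$ plus the covering $\mathcal{F}_m \to H_{n,1}$ and Remark \ref{serre}) is exactly the mechanism the paper uses in Corollary \ref{Cor:ss/Fp-max}. For the converse, your reduction is sound: maximality over $\mathbb{F}_{p^{2j}}$ forces supersingularity, Theorem \ref{Thm:ss/Fp} then gives $f$ even and $i_0=f/2$ where $f$ is the order of $p$ mod $m$, and the whole statement collapses to producing one normalized Weil number of order exactly $2f$. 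Your computation of the Bennama--Carbonne exponents for $\ell=1$ is right ($\theta=1$, $\delta=n-1$, $\lambda=(n-1)^2$, and the exponent at infinity is $\equiv -t(n-1)$ mod $m$ since $m\equiv 1 \bmod (n-1)$), so every character $t\neq 0$ does contribute a one-dimensional eigenspace, and the Frobenius-orbit argument correctly yields characteristic polynomial $T^f-c$ on the span of the orbit of $t=1$; pinning $c=-p^{f/2}$ by first applying the easy direction with $j=i_0$ is a nice self-strengthening move that lets you avoid evaluating the Jacobi sum explicitly.

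The one place where real work remains is the one you flag yourself: the $\mu_m$-eigenspace decomposition of $H^1$ for the cyclic cover, the dimension count $\dim V_t = 1$ from the three nonvanishing local exponents, and the fact that Frobenius permutes the $V_t$ along $\langle p\rangle$-orbits so that $\mathrm{Frob}^f$ is a scalar on each orbit-span. This is standard (it is the engine behind Aoki's and Shioda--Katsura's results already cited in the paper), but as written it is asserted rather than proved, so your text is an outline with one substantial black box rather than a self-contained proof. Two minor points of hygiene: you should say explicitly that the Carbonne birational map is defined over $\mathbb{F}_p$, so the smooth models have the same zeta function; and you should exclude the degenerate case $n=1$ (genus $0$), where the statement is vacuous.
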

\begin{theorem}[{\cite[Theorem 4.5]{AKT}}]
Assume that $\gcd(n,\ell) = 1$ and $m$ is prime. Then $H_{n,\ell}$ is maximal over $\mathbb{F}_{p^{2j}}$ if and only if $p^j \equiv -1 \bmod m$ for some positive integer $j$.
\end{theorem}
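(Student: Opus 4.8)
The plan is to prove the two implications separately, using the Fermat cover $\mathcal{F}_m\to H_{n,\ell}$ for the ``if'' part and the cyclic structure of the Carbonne model \eqref{Eq:Carbonne Hurwitz} for the ``only if'' part. For the easy direction, suppose $p^j\equiv -1\bmod m$. Then $\mathcal{F}_m$ is maximal over $\mathbb{F}_{p^{2j}}$ by \cite[Theorem~5]{Tafazolian}, so by Remark~\ref{rmk:nwnlemma} all of its normalized Weil numbers over $\mathbb{F}_{p^{2j}}$ equal $-1$; since $\mathcal{F}_m$ covers $H_{n,\ell}$ (Section~\ref{sec:covers}), Remark~\ref{serre} shows the normalized Weil numbers of $H_{n,\ell}/\mathbb{F}_{p^{2j}}$ form a subset of those of $\mathcal{F}_m/\mathbb{F}_{p^{2j}}$, hence are all $-1$, and $H_{n,\ell}$ is maximal over $\mathbb{F}_{p^{2j}}$. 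This is exactly the argument of Corollary~\ref{Cor:ss/Fp-max} and does not use that $m$ is prime.

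For the converse, suppose $H_{n,\ell}$ is maximal over $\mathbb{F}_{p^{2j}}$. By Remarks~\ref{rmk:nwnlemma} and \ref{Rmk:NWN/Extentions}, the $2j$-th powers of the normalized Weil numbers of $H_{n,\ell}/\mathbb{F}_p$ are all $-1$, hence those numbers are roots of unity and $H_{n,\ell}$ is supersingular over $\mathbb{F}_p$. By Theorem~\ref{Thm:ss/Fp} the set of positive integers $i$ with $p^i\equiv -1\bmod m$ is nonempty; let $i_0$ be its least element. A short elementary argument (if $0<i<i_0$ and $p^i\equiv 1$ then $p^{i_0-i}\equiv -1$, contradicting minimality) shows $\mathrm{ord}_m(p)=2i_0$, and consequently $p^{j'}\equiv -1\bmod m$ holds precisely when $j'$ is an odd multiple of $i_0$. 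It therefore suffices to show that $H_{n,\ell}$ maximal over $\mathbb{F}_{p^{2j}}$ forces $j$ to be an odd multiple of $i_0$.

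For the key step I would pass to the model \eqref{Eq:Carbonne Hurwitz}, $y'^m=x'^{\lambda}(x'-1)$, which carries a faithful $\mathbb{Z}/m$-action $y'\mapsto \zeta_m y'$ realizing the curve as a cyclic $\mathbb{Z}/m$-cover of $\mathbb{P}^1$ branched over $0,1,\infty$ with local monodromies $\lambda,\,1,\,m-\lambda-1$. When $m$ is prime all three monodromies are nonzero modulo $m$ (otherwise the model would be rational, whereas $g(H_{n,\ell})=(m-1)/2\ge 1$), so each nontrivial character eigenspace $H^1_{\chi^t}$, $1\le t\le m-1$, is one-dimensional; geometric Frobenius permutes these lines by multiplication by $p$ on $\mathbb{Z}/m$, so it decomposes $H^1$ into cyclic blocks, one per orbit, all of size $f=\mathrm{ord}_m(p)=2i_0$, with the $f$-th power of Frobenius acting on each line of an orbit by a common Jacobi sum. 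Applying the easy direction with $j=i_0$, the curve is maximal over $\mathbb{F}_{p^{2i_0}}=\mathbb{F}_{p^{f}}$, so every such Jacobi sum equals $-p^{i_0}$; hence the characteristic polynomial of Frobenius over $\mathbb{F}_p$ on each block is $X^{f}+p^{i_0}$, and the normalized Weil numbers of $H_{n,\ell}/\mathbb{F}_p$ consist of $(m-1)/(2i_0)$ copies of the set $\{\zeta:\zeta^{2i_0}=-1\}$ of all $(2i_0)$-th roots of $-1$. In particular a primitive $(4i_0)$-th root of unity occurs among them. Since $H_{n,\ell}$ is maximal over $\mathbb{F}_{p^{2j}}$ exactly when $\alpha^{2j}=-1$ for every such $\alpha$, evaluating on the primitive $(4i_0)$-th root forces $2j\equiv 2i_0\pmod{4i_0}$, i.e.\ $j$ is an odd multiple of $i_0$, while the reverse implication is immediate ($\zeta^{2i_0}=-1$ and $j/i_0$ odd give $\zeta^{2j}=-1$). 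By the second paragraph this is equivalent to $p^j\equiv -1\bmod m$, which completes the proof.

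\textbf{The main obstacle} is the eigenspace/Jacobi-sum bookkeeping in the key step: one must verify that $m$ prime forces every $\chi^t$-eigenspace of $H^1$ to be one-dimensional, so that Frobenius genuinely splits into $f$-cycles, and that maximality over $\mathbb{F}_{p^{f}}$ pins every relevant Jacobi sum down to $-p^{i_0}$ — equivalently, that a primitive $(4i_0)$-th root of unity really does appear among the normalized Weil numbers. This is precisely what upgrades the single fact ``maximal over $\mathbb{F}_{p^{2i_0}}$'' supplied by the Fermat cover into the exact list of even exponents over which $H_{n,\ell}$ is maximal. A variant that trades this analysis for more input on Fermat curves is to run the same orbit argument directly on $\mathcal{F}_m$ (whose Jacobi sums are all $-p^{i_0}$ by \cite[Theorem~5]{Tafazolian}), produce the primitive $(4i_0)$-th root there, and check that it survives in the sub-multiset of normalized Weil numbers cut out by the Hurwitz quotient.
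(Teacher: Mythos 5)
The paper does not actually prove this statement: it is imported verbatim from \cite[Theorem 4.5]{AKT}, so there is no internal proof to compare against and your argument must stand on its own — and, modulo standard facts you invoke without proof, it does. Your ``if'' direction is exactly the covering argument the paper uses elsewhere (as in Corollary \ref{Cor:ss/Fp-max}, via \cite[Theorem 5]{Tafazolian}, Remark \ref{serre} and Remark \ref{rmk:nwnlemma}), and indeed needs no primality of $m$. Your ``only if'' direction is a genuinely different route from simply citing \cite{AKT}: you feed maximality into supersingularity, use Theorem \ref{Thm:ss/Fp} to get a minimal $i_0$ with $p^{i_0}\equiv -1\bmod m$, and you correctly isolate the crux — from maximality over $\mathbb{F}_{p^{2i_0}}$ and Remark \ref{Rmk:NWN/Extentions} alone one only learns that every normalized Weil number $\alpha$ of $H_{n,\ell}/\mathbb{F}_p$ satisfies $\alpha^{2i_0}=-1$, which is not enough (a priori all $\alpha$ could be $\pm i$ while $i_0>1$, making the curve maximal over extra fields $\mathbb{F}_{p^{2j}}$ with $p^j\not\equiv-1$); what must be shown is that some $\alpha$ has exact order $4i_0$. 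Your analysis of the cyclic model \eqref{Eq:Carbonne Hurwitz} supplies this, and it is precisely where $m$ prime enters: all three local monodromies $\lambda,1,m-\lambda-1$ are nonzero mod $m$ (else the model would be rational, contradicting $g=(m-1)/2\geq 1$), so each nontrivial eigenspace of $H^1$ is one-dimensional, the orbits of multiplication by $p$ on $(\mathbb{Z}/m)^{*}$ all have size $f=2i_0$, Frobenius splits $H^1$ into blocks with characteristic polynomial $X^{f}-\beta$, and maximality over $\mathbb{F}_{p^{f}}$ pins $\beta=-p^{i_0}$, so each block contributes the full set of $2i_0$-th roots of $-1$, in particular a primitive $(4i_0)$-th root of unity; this matches the paper's table (e.g.\ $H_{3,1}$ with $m=7$, $p=3$ and $p=13$). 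The ingredients you flag as the ``main obstacle'' (one-dimensionality of the eigenspaces for a triple-branched cyclic cover, Frobenius permuting eigenspaces by multiplication by $p$, the $f$-th power acting by a common scalar along an orbit) are standard, so the proof is sound, though as written it leans on them rather than verifying them; note this dependence on primality is consistent with the composite-$m$ pathology recorded in Remark \ref{Rmk:HurwitzMax/FermatNot}, and no circularity arises since Theorem \ref{Thm:ss/Fp} does not rely on the quoted result.
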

Note that the key property used in \cite{AKT} is the existence of some positive integer $j$ such that
\begin{equation}\label{keyprop}
p^j \equiv -1 \bmod m. 
\end{equation}
\begin{note}
Under the requirements $\ell = 1$, or $\gcd(n,\ell) = 1$ and $m$ prime, the results in \cite{AKT} and \cite[Theorem 5]{Tafazolian} show that $\mathcal{F}_m$ is maximal over $\mathbb{F}_{q^2}$ if and only if $H_{n,\ell}$ is maximal over $\mathbb{F}_{q^2}$.
\end{note}
We consider the case when $H_{n,\ell}$ and $\mathcal{F}_m$ are minimal.
\begin{corollary} \label{Tmin}
If $\ell=1$, or $n$ and $\ell$ are relatively prime and $m$ is prime, $H_{n,\ell}$ is minimal over $\mathbb{F}_{p^{4i}}$ if and only if $\mathcal{F}_m$ is minimal over $\mathbb{F}_{p^{4i}}$.
\end{corollary}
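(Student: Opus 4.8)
The plan is to prove the two implications separately; below, ``NWN'' abbreviates ``normalized Weil number''.

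First I would dispose of the implication ``$\mathcal{F}_m$ minimal over $\mathbb{F}_{p^{4i}}$ $\Longrightarrow$ $H_{n,\ell}$ minimal over $\mathbb{F}_{p^{4i}}$'', which requires no hypothesis on $\ell$ or $m$. The covering $\phi\colon\mathcal{F}_m\to H_{n,\ell}$ of Section \ref{sec:covers} is defined over the prime field, hence over $\mathbb{F}_{p^{4i}}$, so by Remark \ref{serre} the NWNs of $H_{n,\ell}$ over $\mathbb{F}_{p^{4i}}$ are a subset of those of $\mathcal{F}_m$ over $\mathbb{F}_{p^{4i}}$. If $\mathcal{F}_m$ is minimal over $\mathbb{F}_{p^{4i}}$, then by Remark \ref{rmk:nwnlemma} all its NWNs over that field equal $1$, hence so do all NWNs of $H_{n,\ell}$, i.e.\ $H_{n,\ell}$ is minimal over $\mathbb{F}_{p^{4i}}$.

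For the converse, suppose $H_{n,\ell}$ is minimal over $\mathbb{F}_{p^{4i}}$. Then every NWN of $H_{n,\ell}$ over $\mathbb{F}_p$ is a $(4i)$-th root of unity, so $H_{n,\ell}$ is supersingular over $\mathbb{F}_p$; by Theorem \ref{Thm:ss/Fp} there is a least positive integer $i_0$ with $p^{i_0}\equiv-1\bmod m$, and by Corollary \ref{cor:SSHurwitz Covered by SSFermat} the curve $\mathcal{F}_m$ is also supersingular over $\mathbb{F}_p$ (with the same $i_0$, since $i_0$ depends only on $p$ and $m$). Write $i_0=2^{c}u$ with $u$ odd. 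Since $m$ is odd and $p^{i_0}\equiv-1\bmod m$, the order of $p$ in $(\mathbb{Z}/m)^{*}$ is $2i_0$, so for every $k\ge1$ one has $p^{k}\equiv-1\bmod m$ precisely when $i_0\mid k$ and $k/i_0$ is odd. Now fix $C\in\{\mathcal{F}_m,H_{n,\ell}\}$. By \cite[Theorem 5]{Tafazolian} (for $\mathcal{F}_m$) and, under the standing hypothesis $\ell=1$, or $\gcd(n,\ell)=1$ and $m$ prime, by \cite{AKT} (for $H_{n,\ell}$), the curve $C$ is maximal over $\mathbb{F}_{p^{2i_0}}$. By Remarks \ref{rmk:nwnlemma} and \ref{Rmk:NWN/Extentions} this means every NWN $\gamma$ of $C$ over $\mathbb{F}_p$ satisfies $\gamma^{2i_0}=-1$; since $\gamma$ is a root of unity, this forces the order of $\gamma$ to equal $2^{c+2}v$ for some odd $v\mid u$. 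Let $B_C$ be the least common multiple of the odd parts of the orders of the NWNs of $C$ over $\mathbb{F}_p$, so $B_C\mid u$. A short computation with these orders shows, for all $k\ge1$, that $C$ is maximal over $\mathbb{F}_{p^{2k}}$ if and only if $2^{c}B_C\mid k$ with $k/(2^{c}B_C)$ odd, and that $C$ is minimal over $\mathbb{F}_{p^{4k}}$ if and only if $2^{c}B_C\mid k$. Matching the first criterion against ``$p^{k}\equiv-1\bmod m$'' (equivalently ``$i_0\mid k$ with $k/i_0$ odd''), which by \cite{Tafazolian} and \cite{AKT} also governs maximality of $C$ over $\mathbb{F}_{p^{2k}}$, forces $2^{c}B_C=i_0$; hence $B_{\mathcal{F}_m}=B_{H_{n,\ell}}=u$. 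Consequently both ``$\mathcal{F}_m$ minimal over $\mathbb{F}_{p^{4i}}$'' and ``$H_{n,\ell}$ minimal over $\mathbb{F}_{p^{4i}}$'' are equivalent to the single condition $i_0\mid i$, which completes the converse.

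The step I expect to be the main obstacle is the bookkeeping with roots of unity in the last paragraph: checking that every NWN of either curve over $\mathbb{F}_p$ has order whose $2$-part is exactly $2^{c+2}$, extracting the two divisibility criteria, and---above all---using the explicit congruence descriptions of the maximal ranges of the two curves furnished by \cite{Tafazolian} and \cite{AKT} to identify $B_{\mathcal{F}_m}$ with $B_{H_{n,\ell}}$. Everything else follows formally from the covering $\mathcal{F}_m\to H_{n,\ell}$ and the previously cited theorems.
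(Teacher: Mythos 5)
Your proof is correct, and the overall skeleton matches the paper's: the easy direction is the same covering/NWN-subset argument via Remark \ref{serre}, and the converse likewise starts from ``minimal $\Rightarrow$ supersingular'' and Theorem \ref{Thm:ss/Fp}. Where you diverge is in how the converse is finished. The paper takes the minimal $j$ with $p^j\equiv -1\bmod m$, invokes Corollary \ref{Cor:ss/Fp-max} to get $H_{n,\ell}$ maximal over $\mathbb{F}_{p^{2j}}$ (hence minimal over $\mathbb{F}_{p^{4j}}$), and then asserts in one line that minimality of $j$ forces $\mathbb{F}_{p^{4j}}\subseteq\mathbb{F}_{p^{4i}}$, i.e.\ $j\mid i$, after which minimality of $\mathcal{F}_m$ over $\mathbb{F}_{p^{4j}}$ (from the maximality results) is promoted to $\mathbb{F}_{p^{4i}}$. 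You instead carry out the root-of-unity bookkeeping explicitly: from maximality over $\mathbb{F}_{p^{2i_0}}$ you pin down the $2$-part of every NWN order, and then use the \emph{iff} direction of \cite{Tafazolian} and \cite{AKT} to force $2^cB_C=i_0$ for both curves, yielding the clean equivalence ``minimal over $\mathbb{F}_{p^{4k}}$ iff $i_0\mid k$'' simultaneously for $\mathcal{F}_m$ and $H_{n,\ell}$. That computation is exactly the content hiding behind the paper's terse ``minimality of $j$ implies $j\mid i$'' step (which, as you implicitly note, really does need the only-if direction of the maximality theorems, since a priori the lcm of the odd parts of the NWN orders could be a proper divisor of the odd part of $i_0$); so your version is longer but makes the one glossed-over divisibility claim fully rigorous, and as a bonus it identifies the complete set of fields over which either curve is minimal, rather than just transferring minimality up from $\mathbb{F}_{p^{4j}}$.
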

\begin{proof}
First suppose $\mathcal{F}_m$ is minimal over $\mathbb{F}_{p^{4i}}$ with set $N$ of normalized Weil numbers. Then the normalized Weil numbers of $H_{n,\ell}$ are a subset of $N$. Thus $H_{n,\ell}$ will also be minimal over $\mathbb{F}_{p^{4i}}$.

Now assume $H_{n,\ell}$ is minimal over $\mathbb{F}_{p^{4i}}$. Minimality implies supersingularity, thus $H_{n,\ell}$ must also be supersingular. By Theorem \ref{Thm:ss/Fp} supersingularity of $H_{n,\ell}$ over $\mathbb{F}_p$ implies $p^j \equiv -1 \bmod m$ for some positive integer $j$. Choose a minimal such $j$. Then Corollary \ref{Cor:ss/Fp-max} shows $H_{n,\ell}$ is maximal over $\mathbb{F}_{p^{2j}}$ thus minimal over $\mathbb{F}_{p^{4j}}$. Minimality of $j$ implies that $\mathbb{F}_{p^{4j}}$ is a subfield of $\mathbb{F}_{p^{4i}}$. Consequently, $j\mid i$.

Now, by \cite{AKT} $p^j \equiv -1 \bmod m$ implies that $\mathcal{F}_m$ is maximal over $\mathbb{F}_{p^{2j}}$. Hence, $\mathcal{F}_m$ is minimal over $\mathbb{F}_{p^{4j}}$. Because $j \mid i$, $\mathcal{F}_m$ is minimal over $\mathcal{F}_{p^{4i}}$.
\end{proof}

\begin{note} \label{Rmk:HurwitzMax/FermatNot}
The curve $H_{3,3}$ is maximal over $\mathbb{F}_{5^{2}}$ but $\mathcal{F}_{9}$ is not. The above theorems show a supersingular Hurwitz curve and its covering Fermat curve will both be maximal over $\mathbb{F}_{p^{2i}}$. This does not imply that the Fermat curve will always be maximal over the same field extension that the Hurwitz curve is. The Hurwitz curve could also be maximal over  $\mathbb{F}_{p^{2j}}$ where $j\mid i$ with $i/j$ odd. In this case the Fermat curve may not be maximal over this field because it has a higher genus. Unfortunately our example of this does not have $n$ and $\ell$ being relatively prime. It is difficult to find an example with $n$ and $\ell$ relatively prime, as the genera of Hurwitz curves grow quickly causing the point counts to become computationally expensive.
\end{note}

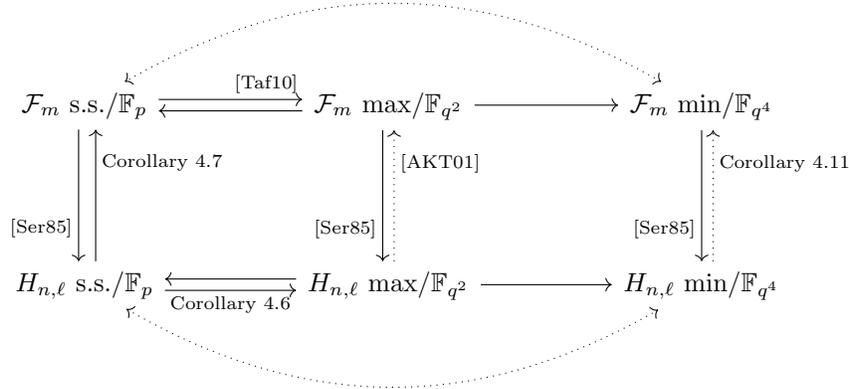
\begin{figure}[!ht]
\centering
\usetikzlibrary{decorations.pathmorphing}
\begin{tikzcd}[column sep=5em,row sep=5em]
\text{$\mathcal{F}_m$ s.s.$/\mathbb{F}_p$}  \arrow[r, "\text{\cite{Tafazolian}}" near end, shift left=.5ex] \arrow[d, shift right=.5ex, "\text{\cite{serre}}" swap, near end] 
	& \text{$\mathcal{F}_m$ max$/\mathbb{F}_{q^2}$} \arrow[r] \arrow[l, shift left=.5ex] \arrow[d, "\text{\cite{serre}}" near end, swap, shift right=.5ex]
	& \text{$\mathcal{F}_m$ min$/\mathbb{F}_{q^4}$} \arrow[d, "\text{\cite{serre}}" near end, swap] \arrow[ulld, leftrightarrow, dotted, bend right] \\
	\text{$H_{n,\ell}$ s.s.$/\mathbb{F}_p$} \arrow[r, "\text{Corollary \ref{Cor:ss/Fp-max}}" swap, shift right=.5ex] \arrow[u, "\text{Corollary \ref{cor:SSHurwitz Covered by SSFermat}}" swap, near end, shift right=1ex] \arrow[drru, leftrightarrow, dotted, bend right]
	& \text{$H_{n,\ell}$ max$/\mathbb{F}_{q^2}$} \arrow[r] \arrow[l, shift right=.5ex] \arrow[u, dotted, "\text{\cite{AKT}}" near end, swap, shift right=.5ex]
    & \text{$H_{n,\ell}$ min$/\mathbb{F}_{q^4}$} \arrow[u, dotted, "\text{Corollary \ref{Tmin}}" swap, near end, shift right=1ex]
\end{tikzcd}
\caption{Current results regarding supersingularity, minimality, and maximality of Hurwitz and Fermat curves.}
\label{fig:theory}
\end{figure}
Figure \ref{fig:theory} illustrates how the current theory fits together. The straight, dotted arrows are under the conditions $\ell = 1$, or $\gcd(n,\ell) = 1$ and $m$ prime. The notation max/$\mathbb{F}_{q^2}$ means, for some power $q$ of $p$, the curve is maximal over $\mathbb{F}_{q^2}$. If a curve is maximal over $\mathbb{F}_{q^2}$ then it is minimal over $\mathbb{F}_{q^4}$. The curved arrows show that under appropriate conditions a Hurwitz or Fermat curve is supersingular if and only if it is minimal over some field extension. Corollaries~\ref{Cor:ss/Fp-max} and \ref{cor:SSHurwitz Covered by SSFermat} are under the condition that $\gcd(n,\ell) = 1$, while \cite{AKT} and Corollary \ref{Tmin} are under the condition that $\ell = 1$, or $\gcd(n,\ell) = 1$ and $m$ is prime.

\section{Which Genera Occur and Data}
Here we provide information about which genera occur for Hurwitz curves and provide a classification of supersingular Hurwitz curves having genus less than $5$ defined over ${\mathbb F}_p$ when $p < 37$. 

Recall that the genus of the Hurwitz curve $H_{n,\ell}$ has the following equation
\begin{equation*}
g=\dfrac{n^2-n\ell+\ell^2 - 3\gcd(n,\ell) + 2}{2}.
\end{equation*}

From this, it can be seen that the genus is determined by the quadratic form $q(x,y)=x^2-xy+y^2$ and $\gcd(x,y)$. 
In this section, we provide information about which genera can appear as a result of these equations.

%
%

\begin{theorem}[{\cite[Vol.\ II, pages 310-314]{fermat}}]\label{Thm:PrimeDecomp}
The equation $m=x^2-xy+y^2$ has solutions $x,y \in \mathbb{Z}$ if and only if for every prime $p$ in  the prime decomposition of $m$, either $p \equiv 0,1 \bmod 3$ or $p$ is raised to an even power.
\end{theorem}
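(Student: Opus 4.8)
The plan is to recognize the quadratic form $x^2 - xy + y^2$ as the norm form of the ring of Eisenstein integers. Let $\omega = e^{2\pi i/3}$, so that $\omega^2 + \omega + 1 = 0$ and $\mathbb{Z}[\omega]$ is a Euclidean domain, hence a unique factorization domain. For $\alpha = x + y\omega$ with $x,y \in \mathbb{Z}$ one computes
\begin{equation*}
N(\alpha) = \alpha\overline{\alpha} = (x + y\omega)(x + y\omega^2) = x^2 - xy + y^2 .
\end{equation*}
Thus the integers represented by the form are exactly the norms of elements of $\mathbb{Z}[\omega]$. Since the form is positive definite we may assume $m \geq 0$; the cases $m = 0, 1$ are immediate, so suppose $m \geq 2$, and reduce the problem to the prime factorization of $m$ via multiplicativity of $N$ together with unique factorization in $\mathbb{Z}[\omega]$.

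The first real step is to describe how each rational prime factors in $\mathbb{Z}[\omega]$, equivalently which primes arise as norms. The prime $3$ ramifies, $3 = -\omega^2(1-\omega)^2$ with $N(1-\omega) = 3$. A prime $p \equiv 1 \bmod 3$ splits: since $-3$ is a quadratic residue mod $p$ the polynomial $t^2 + t + 1$ has a root mod $p$, so $p$ is reducible in $\mathbb{Z}[\omega]$ and factors as $p = \pi\overline{\pi}$ with $N(\pi) = p$. A prime $p \equiv 2 \bmod 3$ stays inert, because $t^2 + t + 1$ is irreducible mod $p$; then $p$ is itself an Eisenstein prime with $N(p) = p^2$. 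The residue-criterion input here can be supplied either by quadratic reciprocity or by the identity $4(x^2 - xy + y^2) = (2x - y)^2 + 3y^2$. Consequently the set of norms is precisely the multiplicative monoid generated by $1$, $3$, the primes $p \equiv 1 \bmod 3$, and the squares $p^2$ with $p \equiv 2 \bmod 3$.

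For the ``if'' direction, given $m$ in which every prime $p \equiv 2 \bmod 3$ occurs to an even power, I would write each prime power in the factorization of $m$ as a norm --- $3 = N(1-\omega)$, $p = N(\pi_p)$ for $p \equiv 1 \bmod 3$, and $p^{2} = N(p)$ for $p \equiv 2 \bmod 3$ --- multiply the corresponding Eisenstein integers, and read off $x$ and $y$ from the product; multiplicativity of $N$ then gives $m$ as the value of the form. For the ``only if'' direction, suppose $m = N(\alpha)$ and let $p \equiv 2 \bmod 3$. Factor $\alpha$ into Eisenstein primes: the rational prime $p$ is itself such a prime, and no other Eisenstein prime has norm divisible by $p$, since the remaining norms are $3$, primes $\equiv 1 \bmod 3$, or squares of primes $\equiv 2 \bmod 3$ distinct from $p$. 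Hence $v_p(m) = 2\, v_p(\alpha)$ is even.

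The step I expect to be the crux is this last valuation computation --- the statement that an inert prime contributes an even exponent to every norm. It rests on unique factorization in $\mathbb{Z}[\omega]$ together with the classification of Eisenstein primes just established; alternatively it can be obtained by an elementary descent: if $p \equiv 2 \bmod 3$ and $p \mid x^2 - xy + y^2$, then from $(2x-y)^2 \equiv -3y^2 \bmod p$ and the fact that $-3$ is a non-residue mod $p$ one gets $p \mid x$ and $p \mid y$, so $p^2 \mid m$, and one induces on $v_p(m)$. Once this is in hand, everything else is bookkeeping with the multiplicativity of the norm.
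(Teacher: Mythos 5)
Your proposal is correct. Note, however, that the paper does not prove this statement at all: it is quoted as a classical result with a citation (to the historical literature on representations by the form $x^2-xy+y^2$), so there is no in-paper argument to compare against. Your Eisenstein-integer proof is the standard modern treatment of that classical fact: the identification $N(x+y\omega)=x^2-xy+y^2$, unique factorization in $\mathbb{Z}[\omega]$, the splitting behaviour of rational primes ($3$ ramified, $p\equiv 1 \bmod 3$ split, $p\equiv 2 \bmod 3$ inert), and the observation that an inert prime contributes only even exponents to norms together give exactly the stated criterion, and your handling of both directions is sound. One small caveat: in your alternative elementary descent, the step ``$(2x-y)^2\equiv -3y^2 \bmod p$ with $-3$ a non-residue forces $p\mid x,y$'' only makes sense for odd $p$; the prime $p=2$ (which is $\equiv 2 \bmod 3$) needs a separate parity check, e.g.\ $x^2-xy+y^2$ is even only when $x$ and $y$ are both even. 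Your main argument via $\mathbb{Z}[\omega]$ already covers $p=2$ correctly, since $2$ is inert, so this does not affect the proof.
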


There is no restriction in Theorem \ref{Thm:PrimeDecomp} on what the values $x$ and $y$ are. However, for Hurwitz curves we require $n$ and $\ell$ to be positive. The question remains as to when the equation $m=q(x,y)$ has solutions in the positive integers. To solve this we study the following automorphisms of $q(x,y)=m.$
\begin{gather*}
\begin{cases}
f: \ \mathbb{Z}^2 \rightarrow \mathbb{Z}^2 \mid f(x,y) \mapsto (y,x)\\
g: \ \mathbb{Z}^2 \rightarrow \mathbb{Z}^2 \mid g(x,y) \mapsto (-x,-y)\\
\varphi: \ \mathbb{Z}^2 \rightarrow \mathbb{Z}^2 \mid \varphi(x,y) \mapsto (x,x-y)\\
I: \ \mathbb{Z}^2 \rightarrow \mathbb{Z}^2 \mid I(x,y) \mapsto (x,y)
\end{cases}
\end{gather*}
To see that $\varphi(x,y)$ is an automorphism, compute the following
\begin{align*} q\circ\varphi(x,y) &= x^2-x(x-y)+(x-y)^2 \\
&= x^2 -x^2+xy+x^2-2xy+y^2 \\
&= x^2-xy+y^2 \\
&= q(x,y).
\end{align*}
\begin{corollary} \label{comorollary}
If the equation $m=q(x,y)$ has a solution $(x,y)\in \mathbb{Z}^2$ then there is a solution with $(x',y')\in \mathbb{N}^2$.
\end{corollary}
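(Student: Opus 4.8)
The plan is to use the three nontrivial automorphisms $f$, $g$, and $\varphi$ to transport an arbitrary integer solution of $m=q(x,y)$ into the positive quadrant. The maps $f$ and $g$ clearly preserve $q$ (one swaps the two variables, the other negates both), and $\varphi$ preserves $q$ by the identity $q\circ\varphi=q$ checked above; so any point obtained from $(x,y)$ by composing these maps is again a solution of $m=q(x,y)$, and it suffices to produce one such point lying in $\mathbb{N}^2$. As a preliminary reduction I would observe that the form $q$ is positive definite (its discriminant $1-4=-3$ is negative and its leading coefficient is positive), so $m=q(x,y)=0$ forces $(x,y)=(0,0)$; hence we may assume $m>0$, and in particular any solution with a zero coordinate has its other coordinate nonzero.

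I would then argue by cases on the sign of the product $xy$. If $xy\geq 0$, the point $(|x|,|y|)$ is either $(x,y)$ itself or $g(x,y)$, so it lies in the orbit of $(x,y)$ and has nonnegative entries; if moreover one of its entries vanishes, we may first apply $f$ so that the solution has the form $(k,0)$ with $k>0$, and then $\varphi(k,0)=(k,k)$ lies in $\mathbb{N}^2$ with both entries positive. If instead $xy<0$, then $x$ and $y$ are both nonzero with opposite signs, and after applying $f$ if necessary we may assume $x>0>y$; then $\varphi(x,y)=(x,x-y)$ has first coordinate $x>0$ and second coordinate $x-y>0$, so it lies in $\mathbb{N}^2$. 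The two cases together exhaust $\mathbb{Z}^2$, which finishes the proof.

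I do not anticipate a real obstacle here. The only places that call for attention are the handling of a vanishing coordinate — dealt with by the single extra application of $\varphi$ (preceded if needed by $f$) noted above — and recording once and for all that arbitrary compositions of $f$, $g$, and $\varphi$ preserve the value of $q$, which is immediate from $q\circ\varphi=q$ together with the obvious invariance under $f$ and $g$.
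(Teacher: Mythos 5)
Your proposal is correct and follows essentially the same route as the paper: it uses the same automorphisms $f$, $g$, $\varphi$ and a sign-based case analysis to move an integer solution into the positive quadrant, with the zero-coordinate case handled by $\varphi$ (after $f$ if needed), just as in the paper's proof.
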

\begin{proof}
We separate into cases, depending on the values of $x$ and $y$.
\begin{enumerate}
\item If both $x$ and $y$ are negative, then $g(x,y) = (-x,-y) \in \mathbb{N}^2$.
\item If $y$ negative and $x$ positive, then $\varphi(x,y) = (x,x-y) \in \mathbb{N}^2$.
\item If $x$ negative and $y$ positive, then $\varphi(f(x,y)) = (y,y-x) \in \mathbb{N}^2$.
\item If $x$ is 0, then $\varphi\circ f (0,y) = (y,y)$ and if $y$ is 0, then $\varphi(y,0) = (y,y)$.
\end{enumerate}
\end{proof}

By counting points and using Lemma \ref{Lem:ZetaCoeffs} we computed, using \cite{sage}, the $L$-polynomials and normalized Weil numbers of many supersingular Hurwitz curves over $\mathbb{F}_p$. When $n$ and $\ell$ are not relatively prime, it is possible that certain points of the equation for $H_{n,\ell}$ are singular. Resolving these singularities requires taking a field extension of $\mathbb{F}_p$. To adjust for this we check if $q\equiv 1\bmod\gcd(n,\ell)$ and count the multiplicities of singular points. This gives the correct point counts to compute the $L$-polynomial of the normalization of the equation. The table has all supersingular Hurwitz curves $H_{n,\ell}$ of genus less than 5 for primes less than 37. The table also includes some curves of genus 6.

\begin{table}[!ht]
\small
\begin{tabular}{|l|l|l|l|l|l|}
    \hline
      \textbf{n} & \textbf{l} & \textbf{p} & \textbf{g} & \textbf{L-Polynomial} & \textbf{NWNs (multiplicity)}\\ 
   		\hline
        2 & 1 & 5 & 1 & $5T^2 + 1$ & i, -i\\
        2 & 1 & 11 & 1 & $11T^2 + 1$ & i, -i\\
        2 & 1 & 17 & 1 & $17T^2 + 1$ & i, -i\\
        2 & 1 & 23 & 1 & $23T^2 + 1$ & i, -i\\
        2 & 1 & 29 & 1 & $29T^2 + 1$ & i, -i \\
        \hline
        3 & 3 & 5 & 1 & $5T^2 + 1$ & i, -i\\
        3 & 3 & 11 & 1 & $11T^2 + 1$ & i, -i\\
        3 & 3 & 17 & 1 & $17T^2 + 1$ & i, -i\\
        3 & 3 & 23 & 1 & $23T^2 + 1$ & i, -i\\
        3 & 3 & 29 & 1 & $29T^2 + 1$ & i, -i\\
        \hline
        3 & 1 & 3 & 3 & $27T^6 + 1$ & i,-i, $\zeta_{12}, \zeta_{12}^5, \zeta_{12}^7, \zeta_{12}^{11}$\\
        3 & 1 & 5 & 3 & $125T^6 + 1$ & i,-i, $\zeta_{12}, \zeta_{12}^5, \zeta_{12}^7, \zeta_{12}^{11}$\\
        3 & 1 & 13 & 3 & $2197T^6 + 507T^4 + 39T^2 + 1$ & i(3), -i(3)\\
        3 & 1 & 17 & 3 & $4913T^6 + 1$ & i, -i, $\zeta_{12}, \zeta_{12}^5, \zeta_{12}^7, \zeta_{12}^{11}$\\
        3 & 1 & 19 & 3 & $6859T^6 + 1$ & i, -i, $\zeta_{12}, \zeta_{12}^5, \zeta_{12}^7, \zeta_{12}^{11}$\\
        3 & 1 & 31 & 3 & $29791T^6 + 1$ & i, -i, $\zeta_{12}, \zeta_{12}^5, \zeta_{12}^7, \zeta_{12}^{11}$\\
        \hline
        3 & 2 & 3 & 3 & $27T^6 + 1$ & i,-i, $\zeta_{12}, \zeta_{12}^5, \zeta_{12}^7, \zeta_{12}^{11}$\\
        3 & 2 & 5 & 3 & $125T^6 + 1$ & i,-i, $\zeta_{12}, \zeta_{12}^5, \zeta_{12}^7, \zeta_{12}^{11}$\\
        3 & 2 & 13 & 3 & $2197T^6 + 507T^4 + 39T^2 + 1$ & i(3), -i(3)\\
        3 & 2 & 17 & 3 & $4913T^6 + 1$ & i, -i, $\zeta_{12}, \zeta_{12}^5, \zeta_{12}^7, \zeta_{12}^{11}$\\
        3 & 2 & 19 & 3 & $6859T^6 + 1$ & i, -i, $\zeta_{12}, \zeta_{12}^5, \zeta_{12}^7, \zeta_{12}^{11}$\\
        3 & 2 & 31 & 3 & $29791T^6 + 1$ & i, -i, $\zeta_{12}, \zeta_{12}^5, \zeta_{12}^7, \zeta_{12}^{11}$ \\
        \hline
        4 & 2 & 5 & 4 & $625T^8 + 500T^6 + 150T^4 + 20T^2 + 1$ &i(4), -i(4)\\
        4 & 2 & 17 & 4 & $83521T^8 + 19652T^6 + 1734T^4 + 68T^2 + 1$ &i(4), -i(4)\\
        4 & 2 & 29 & 4 & $707281T^8 + 97556T^6 + 5046T^4 + 116T^2 + 1$ &i(4), -i(4)\\
        \hline
        4 & 1 & 5 & 6 & $15625T^{12} + 1875T^8 + 75T^4 + 1$ & $\zeta_{8}(3), \zeta_{8}^3(3), \zeta_{8}^5(3), \zeta_{8}^7(3)$\\
        \hline
        4 & 3 & 5 & 6 & $15625T^{12} + 1875T^8 + 75T^4 + 1$ & $\zeta_{8}(3), \zeta_{8}^3(3), \zeta_{8}^5(3), \zeta_{8}^7(3)$\\ 
        \hline
        5 & 5 & 3 & 6 & $729T^{12} + 243T^8 + 27T^4 + 1$ &$\zeta_{8}(3), \zeta_{8}^3(3), \zeta_{8}^5(3), \zeta_{8}^7(3)$\\
        5 & 5 & 7 & 6 & $117649T^{12} + 7203T^8 + 147T^4 + 1$ & $\zeta_{8}(3), \zeta_{8}^3(3), \zeta_{8}^5(3), \zeta_{8}^7(3)$\\
        5 & 5 & 13 & 6 & $4826809T^{12} + 85683T^8 + 507T^4 + 1$ & $\zeta_{8}(3), \zeta_{8}^3(3), \zeta_{8}^5(3), \zeta_{8}^7(3)$\\
        \hline
\end{tabular}
\caption{Supersingular Hurwitz curves in characteristic $p< 37$ with genus $< 5$.}
\end{table}

\bibliography{bibliography}

\end{document}